\theoremstyle{plain}
\newtheorem{theorem}{Theorem}[section]
\newtheorem{prop}[theorem]{Proposition}
\newtheorem{lemma}[theorem]{Lemma}
\theoremstyle{definition}
\newtheorem{remark}[theorem]{Remark}
\newtheorem{definition}[theorem]{Definition}
\newtheorem{example}[theorem]{Example}
\newtheorem{assumption}[theorem]{Assumption}
\newcommand{\C}{\mathbb{C}}
\newcommand{\R}{\mathbb{R}}
\newcommand{\Z}{\mathbb{Z}}
\newcommand{\SF}{\mathscr F}
\newcommand{\ind}{{\rm ind}}
\newcommand{\bD}{\mathbf D}
\renewcommand{\tilde}{\widetilde}
\renewcommand{\setminus}{\smallsetminus}
\newcommand{\nin}{/\kern-2.1ex\in}
\def\<{\left\langle}
\def\>{\right\rangle}
\def\End{\operatorname{End}}
\def\ind{\operatorname{ind}}
\def\supp{\operatorname{supp}}
\numberwithin{equation}{section}
\title[Cobordism invariance and the well-definedness of local index]{Cobordism invariance and \\ the well-definedness of local index}
\date{}
\author{Hajime Fujita}
\subjclass[2010]{Primary 19K56 ; Secondary 58G20} 
\keywords{cobordism, analytic index}
\thanks{$^1$Partly supported by Grant-in-Aid for Young Scientists (B) 23740059 
and Young Scientists (B) 26800045.}
\address{Department of Mathematical and Physical Sciences
Japan Women's University
2-8-1 Mejirodai, Bunkyo-ku Tokyo, 112-8681 Japan}
\email{fujitah@fc.jwu.ac.jp}
\begin{document}

\maketitle

\begin{abstract}
In the previous papers,  Furuta, Yoshida and the author 
gave a definition of analytic index theory of Dirac-type 
operator on open manifolds   
by making use of some geometric structure on an open covering of the end of the open manifold and a perturbation of the Dirac-type operator. 
In this paper we show the cobordism invariance of the index, 
and as an application we show 
the well-definedness of the index with respect to the choice of the open covering. 
\end{abstract}

\tableofcontents

\section{Introduction}
In the series of papers \cite{Fujita-Furuta-Yoshida1, Fujita-Furuta-Yoshida2}, 
Furuta, Yoshida and the author developed an index theory of Dirac-type operators on open Riemannian manifolds when an additional structure, which is called an {\it acyclic compatible system}, is given on the end of the manifold. 
The resulting index, {\it local index} (or {\it relative index}), has several topological properties. 
In particular the {\it excision formula} leads us to have a localization 
formula of index of Dirac-type operators. 
Such a localization formula enables us to have geometric proofs of 
the localization formula of Riemann-Roch numbers 
to Bohr-Sommerfeld fibers and singular fibers of Lagrangian fibrations 
\cite{Fujita-Furuta-Yoshida1}, 
Danilov type formula for 4-dimensional locally toric Lagrangian fibrations 
\cite{Fujita-Furuta-Yoshida2}
and [Q,R]=0 formula for Hamiltonian torus actions \cite{Fujita-Furuta-Yoshida3}. 
To describe the index theory one needs to fix an open covering of the end;  
however, there are several choices of open coverings when one would like to apply the theory to individual manifold, for instance, a symplectic manifold with Hamiltonian torus action. 
It is not clear by definition that the local index does not depend on the choice of the open covering. 

In this paper we show that the well-definedness of the local index, that is, 
if there are two acyclic compatible systems with a specific condition, 
which we call {\it $G$-tangential},  on two ends of a single manifold 
such that the union is also an acyclic compatible system, then 
two local indices defined by two compatible systems coincide. 
The well-definedness is shown by the {\it cobordism invariance} of the local index. 
There are several works (\cite{Higsoncovinv}, \cite{Nicocovinv} and so on) concerning the cobordism invariance of the analytic index of Fredholm operators. 
The proof of the present paper is a variant of the technique developed by 
Braverman~\cite{Braverman, Bravermancobinv}.  
The technique is a version of Witten-type deformation by using a kind of Morita equivalence and an extra symmetry of Clifford algebras of indefinite metrics. 
Using the Morita equivalence the formulation of the cobordism invariance 
can be rephrased as a statement for Clifford module bundles with extra symmetry 
of Clifford algebras of indefinite metrics. 
Such a reformulation leads us to have an extension of the cobordism invariance for 
{\it real} Clifford module bundles. 
%In fact the real case is more fundamental than the complex case in the sense th%at the complex case can be shown by using the real case. 
The cobordism invariance for the complex 
Clifford module bundle can be proved by the almost same argument as in 
\cite{Braverman, Bravermancobinv}. 
We give the proof for the real Clifford module bundle case.

This paper is organized as follows. 
In Section~\ref{Clifford algebras for indefinite metrics} 
we recall definitions of Clifford algebras for a vector spaces/bundles with indefinite metrics and the modules over such algebras. 
In Section~\ref{Acyclic compatible systems and their local indices} 
we recall definitions of compatible fibrations, compatible systems and the 
definition of the local index. 
%Though almost all contents of this section are same as in 
%\cite[Section~2,3,4]{Fujita-Furuta-Yoshida3} 
We introduce a new class of compatible fibrations (resp. systems), 
{\it $G$-tangential} compatible fibrations (resp. systems), which is useful to define the local index and 
describe an application of the well-definedness of the local index. 
For latter convenience we work with Clifford module bundles with 
an action of Clifford algebras of Euclidean space with indefinite metric. 
In Subsection~\ref{Cobordism of compatible fibrations} and Subsection~\ref{Cobordism of compatible systems} we define the notion of cobordism 
of compatible fibrations and compatible systems. 
In Subsection~\ref{Cobordism invariance of local index} we give the statement of the cobordism invariance of the local index (Theorem~\ref{maintheorem}). 
%Theorem~\ref{maintheorem} can be shown by using a technique developed in \cite{}. 
In Section~\ref{Cobordism invariance via extra symmetry of Clifford algebras} 
we give a reformulation of the cobordism invariance using extra symmetry (Theorem~\ref{cobinvref}, Theorem~\ref{cobinvreal}). 
%Such a reformulation enable us to have a formulation of the cobordism invariance for real Clifford module bundles. 
In Section~\ref{Proof of Theorem{realcobinv}} we give the proof 
of Theorem~\ref{cobinvreal} by using a modification of the technique 
in \cite{Braverman, Bravermancobinv} for real Clifford module bundles. 
In Subsection~\ref{Comments on the complex case} we give comments on 
how we modify the proofs in \cite{Braverman, Bravermancobinv} or that for the real case 
to prove the complex case. 
In Section~\ref{Well-definedness of the local index} we show the well-definedness of the local index (Theorem~\ref{well def of local ind}). 
Finally we give an application of Theorem~\ref{well def of local ind}  
to the product of compatible fibration, which is convenient to compute the 
local index.

%%%%%%%%%%%%%%%%%%%%%%%%%%%%%%%%%%%%%%%%%%%%%%%%%%%%%%
\section{Clifford algebras for indefinite metrics}
\label{Clifford algebras for indefinite metrics} 
In this paper we will work with Clifford algebras for 
indefinite metrics and Clifford modules over such algebras. 
Let us recall basic definitions. 

\begin{definition}
Let $\R^{p,q}$ be the real vector space $\R^{p+q}$ 
with an indefinite metric $\cdot$, 
defined by 
$$
u\cdot v=(u_1v_1+\cdots+u_pv_p)-(u_1'v_1'+\cdots+u_q'v_q'
)
$$ 
for $u=(u_1, \cdots , u_p, u'_1,\cdots , u_q')$,  
$v=(v_1, \cdots , v_p, v_1', \cdots,  v_q')\in\R^{p,q}$. 
We define the {\it Clifford algebra} $Cl(\R^{p,q})$ over $\R^{p,q}$ by 
the algebra generated by $\R^{p,q}$ over $\R$ with the {\it Clifford relation} 
$$
uv+vu=-2u\cdot v 
$$ for $u,v\in \R^{p,q}$,  
which has the universal property among the above relation. 
\end{definition}

We often denote $Cl_{p,q}=Cl(\R^{p,q})$ for simplicity. 
By definition we have $Cl_{p,0}=Cl(\R^p)$. 

\begin{definition}
A {\it (complex) representation of $Cl(\R^{p,q})$} is a 
pair $(R, c_R)$ consisting of a Hermitian vector space $R$ and an $\R$-algebra homomorphism, the {\it Clifford action} (or {\it Clifford multiplication}), $c_R:Cl(\R^{p,q})\to {\rm End}_{\C}(R)$ such that $c_R(u)$ is a skew Hermitian 
operator for each $u\in \R^{p,0}\subset \R^{p,q}$ and 
$c_R(v)$ is a Hermitian operator 
for each $v\in \R^{0,q}\subset \R^{p,q}$. 
When $R$ has a $\Z/2$-grading and $c_R(u)$ is degree-one map for each $u\in \R^{p,q}$, 
we call $(R,c_R)$ a {\it $\Z/2$-graded} representation. 

By using a Euclidean vector space and (skew) symmetric operators, 
a {\it real representation} of $Cl(\R^{p,q})$ can be defined in  a 
similar manner. 
\end{definition}

\begin{example}
Let $W=W^+\oplus W^-$ be a $\Z/2$-graded representation of $Cl(\R^{p})$ and 
$\epsilon_W:W\to W$ be the map which represents the grading, i.e., 
$\epsilon|_{W^{\pm}}=\pm {\rm id}_W$. 
By forgetting the grading $W$ can be regarded as a representation of 
$Cl(\R^p\oplus\R\epsilon_W)$, where $\R\epsilon_W$ is the real vector space 
generated by $\epsilon_W$ with the negative definite metric $\epsilon_W\cdot\epsilon_W=-1$. 
\end{example}

%In this paper we mainly work with Hermitian representations unless mentioned. 
It is straightforward to define the Clifford algebra bundle $Cl(E)$ for a real vector bundle $E$ with metric. 
A ($\Z/2$-graded) complex (or real) $Cl(E)$-module bundle can be also defined. 
By definition we have $Cl(E_1\oplus E_2)\cong Cl(E_1)\otimes Cl(E_2)$ 
for two vector bundles $E_1$ and $E_2$. 

\begin{definition}
Let $(M,g)$ be a Riemannian manifold and 
$W$ a $Cl(TM\oplus\R^{p,q})$-module bundle over $M$. 
A differential operator $D:\Gamma(W)\to \Gamma(W)$ is called 
{\it Dirac-type} if $D$ is a formally self-adjoint operator of order-one 
whose principal symbol $\sigma(D):TM\to {\rm End}(W)$ satisfies the 
Clifford relation 
$\sigma(D)(v)\sigma(D)(v')+\sigma(D)(v')\sigma(D)(v)=
-2g(v,v')$ for any $v,v'\in TM$. 
In other words $\sigma(D)$ is equal to the Clifford action of 
$Cl(TM)\subset Cl(TM\oplus \R^{p,q})$. 
If $W$ is $\Z/2$-graded, then we assume that $D$ is of degree-one. 
\end{definition}

%%%%%%%%%%%%%%%%%%%%%%%%%%%%%%%%%%%%%%%%%%%%%%%%%%%%%%%%%%%%%%%%%%%%%%%%%%%%%%
\section{Acyclic compatible systems and their local indices}
\label{Acyclic compatible systems and their local indices} 
In this section we recall some definitions 
of compatible fibration, acyclic compatible system and 
their local indices following \cite{Fujita-Furuta-Yoshida2, Fujita-Furuta-Yoshida3}. 
\subsection{Compatible fibration}
Let $V$ be a manifold. 
%{\color{red}
\begin{definition}\label{compatible fibration}
A {\it compatible fibration on $V$} is a collection of the data 
$\{V_{\alpha}, {\SF}_{\alpha}\}_{\alpha\in A}$ consisting of 
an open covering $\{V_{\alpha}\}_{\alpha\in A}$ of $V$ and 
a foliation $\SF_{\alpha}$ on $V_{\alpha}$ with compact leaves 
which  satisfies the following properties.
\begin{enumerate}
\item The holonomy group of each leaf of $\SF_\alpha$ is finite. 
\item\label{correspondence between foliation and pi_alpha}For each $\alpha$ and $\beta$, if a leaf $L\in \SF_\alpha$ has non-empty intersection $L\cap V_\beta\neq \emptyset$, then, $L\subset V_\beta$. 
%\item\label{foliation on overlap}For each $\alpha$ and $\beta$, the set 
%\[
%\SF_{\alpha\beta}=\{ L_\alpha\cap L_\beta \mid L_\alpha\in \SF_\alpha ,\ L_\beta \in \SF_\beta \} 
%\]
%of the intersections of leaves of $\SF_\alpha$ and $\SF_\beta$ is a foliation 
%on $V_{\alpha}\cap V_{\beta}$. 
%%\item {\color{red}The holonomy group of each leaf of $\SF_{\alpha\beta}$ is finite.} 
\end{enumerate}
\end{definition}
%}

%Let $\{ V_\alpha ,\SF_\alpha\}_{\alpha \in A}$ be a compatible fibration on $V$. 

%%%%%%%%%%%%%%%%%%%%%%%%%%%%%%%
\subsection{Compatible system and its acyclicity}\label{Compatible system and its acyclicity}
Let $(V,g)$ be a Riemannian manifold, $W$ a $Cl(TV\oplus \R^{p,q})$-module bundle over $V$. 
% and $V$ an open subset of $M$ equipped with a compatible fibration $\{V_{\alpha}, {\SF}_{\alpha}\}_{\alpha\in A}$. 
In the rest of this paper we impose the following conditions on the Riemannian metric $g$. 
\begin{assumption}\label{assumption for Riemannian metric}
%\begin{itemize}
%\item 
Let $\nu_\alpha=\{ u\in TV_\alpha \mid g(u,v)=0\ \text{for all }v\in T\SF_\alpha \}$ be the normal bundle of $\SF_\alpha$. Then, $g|_{\nu_\alpha}$ is invariant under holonomy, and gives a transverse invariant metric on $\nu_\alpha$.
%\item On each $V_\alpha\cap V_\beta\neq \emptyset$, for $i=\alpha$, $\beta$, let $\nu_{\alpha\beta}^i$ be the normal bundle of $\SF_{\alpha\beta}$ in $T\SF_i|_{V_\alpha\cap V_\beta}$ which is defined by 
%\[
%\nu_{\alpha\beta}^i=\{ u\in T\SF_i|_{V_\alpha\cap V_\beta}\mid g(u,v)=0\ \text{for all }v\in T\SF_{\alpha\beta} \}
%\]
%Then, $g|_{\nu_{\alpha\beta}^i}$ gives a transverse invariant metric on $\nu_{\alpha\beta}^i$. 
%\item $\nu_{\alpha\beta}^\alpha$ and $\nu_{\alpha\beta}^\beta$ are perpendicular to each other with respect to $g|_{V_\alpha\cap V_\beta}$. 
%\end{itemize}
\end{assumption}
%\begin{remark}
%The above Riemannian metric is an orbifold version of a compatible Riemannian metric which is actually used in \cite{Fujita-Furuta-Yoshida2}. 
%\end{remark}

\begin{definition}\label{compatible system}
A {\it compatible system} on $(\{V_{\alpha}, \SF_{\alpha}\}, W)$ is a data $\{ D_{\alpha}\}_{\alpha \in A}$ satisfying the following properties. 
\begin{enumerate}
\item $D_{\alpha}\colon \Gamma (W|_{V_{\alpha}})\to \Gamma (W|_{V_{\alpha}})$ is an order-one formally self-adjoint differential operator.
\item $D_{\alpha}$ contains only the derivatives along leaves of $\SF_{\alpha}$. 
\item $D_{\alpha}$ is a Dirac-type operator along leaves. 
Namely 
the principal symbol of $D_{\alpha}$ is given by the composition of 
the dual of the natural inclusion $\iota_{\alpha}\colon T\SF_{\alpha}\to TV_{\alpha}$ and the Clifford multiplication 
$c\colon T^*\SF_{\alpha}\cong T\SF_{\alpha}\subset TV_{\alpha} \to \End (W|_{V_{\alpha}})$ . 
If $W$ is $\Z/2$-graded, then $D_{\alpha}$ is of degree-one. 
%$\sigma (D_{\alpha})=c\circ p_{\alpha}\circ \iota_{\alpha}^*\colon T^*V_{\alpha}\to \End (W|_{V_{\alpha}})$, where  is  from the tangent bundle along leaves of $\SF_\alpha$ to $TV_\alpha$, $p_{\alpha}\colon T^*\SF_{\alpha}\to T\SF_{\alpha}$ is the isomorphism induced by the Riemannian metric and is the  
\item For a leaf $L\in \SF_\alpha$ let $\tilde u\in \Gamma (\nu_\alpha|_L)$ be a section of $\nu_\alpha|_L$ parallel along $L$. 
%For $b\in U_{\alpha}$ and $u\in T_bU_{\alpha}$, let $\tilde{u}\in \Gamma(TV_{\alpha}|_{\pi^{-1}_{\alpha}(b)})$ be the horizontal lift of $u$ with respect to the Riemannian metric $g|_{V_\alpha}$. 
$\tilde{u}$ acts on $W|_L$ by the Clifford multiplication $c(\tilde{u})$. Then $D_{\alpha}$ and $c(\tilde{u})$ anti-commute each other, i.e. 
\[
0=\{ D_{\alpha},c(\tilde{u}) \}:=
D_{\alpha}\circ c(\tilde{u})+c(\tilde{u})\circ D_{\alpha}
\]
as an operator on $W|_L$. 
%\item If $V_{\alpha}\cap V_{\beta}\neq \emptyset$, then the anti-commutator $\{D_{\alpha},D_{\beta}\}:=D_{\alpha}\circ D_{\beta}+D_{\beta}\circ D_{\alpha}$ is a differential operator along leaves of $\SF_{\alpha\beta}$ of order at most two. 
\item $D_{\alpha}$ anti-commutes with $Cl_{p,q} (\subset Cl(TM\oplus \R^{p,q}))$-action. 
\end{enumerate}
\end{definition}

As in \cite[Lemma~3.4]{Fujita-Furuta-Yoshida2} for each leaf $L\in\SF_{\alpha}$ 
we have a small 
open tubular neighborhood $V_L$ of $L$ and the 
finite covering  $q_L:\tilde V\to V_L$ such that the induced foliation on 
$\tilde V_L$ is a bundle foliation with the projection $\pi_L:\tilde V_L\to \tilde U_L$. 

%We will use the notations $V_L$, $\tilde V_L$, and $q_L$ in the following definition. We also denote the projection map of the fiber bundle structure by $\pi_L\colon \tilde V_L\to\tilde U_L$. 

\begin{definition}\label{strongly acyclic}
A compatible system $\{ D_\alpha\}_{\alpha \in A}$ on $(\{V_{\alpha}, \SF_{\alpha}\}, W)$ is said to be {\it acyclic} if %there exists a collection of data $\{q_{L_\alpha}:\tilde V_{L_\alpha}\to V_{L_\alpha}, \ \pi_{L_\alpha}:\tilde V_{L_\alpha}\to \tilde U_{L_\alpha} \ | \ \alpha\in A, L_{\alpha}\in \SF_{\alpha}\}$ 
it satisfies the following conditions.  
\begin{enumerate}
\item The Dirac type operator 
$q^*_{L}D_{\alpha}|_{\pi_L^{-1}(\tilde b)}$ has zero kernel 
for each $\alpha\in A$, leaf $L\in \SF_{\alpha}$ and 
$\tilde b\in \tilde U_{L}$. 
\item If $V_\alpha\cap V_\beta\neq \emptyset$, then the anti-commutator 
$\{D_{\alpha},D_{\beta}\}:=D_{\alpha}D_{\beta}+D_{\beta}D_{\alpha}$ is a non-negative operator on $V_\alpha\cap V_\beta$. %$W|_{L_{\alpha\beta}}$ for each $L_{\alpha\beta}\in \SF_{\alpha\beta}$. 
\end{enumerate}
\end{definition}

%%%%%%%%%%%%%%%%%%%%%%%%%%%%%%%%%%
\subsection{Definition of the local index}\label{ind(M,V,W)}
In \cite{Fujita-Furuta-Yoshida2} Furuta, Yoshida and the author showed that 
when a manifold whose end is equipped with an 
acyclic compatible system with some technical conditions, 
the {\it local index} can be defined. 
In this section we first introduce a class of compatible fibration 
which gives a sufficient condition to define the {local index}.

\begin{definition}\label{tangential}
Suppose that a compact Lie group $G$ 
acts on a Riemannian manifold $V$ in an isometric way. 
Let $\{V_{\alpha}, \SF_{\alpha}\}_{\alpha\in A}$ be a compatible fibration on $V$. 
If the following conditions are satisfied, then we call the compatible fibration a {\it $G$-tangential compatible fibration} (or {\it tangential compatible fibration} for short). 
\begin{itemize}
\item $\{V_{\alpha}\}_{\alpha\in A}$ is a $G$-invariant open covering of $V$. 
\item Each leaf $L$ of $\SF_{\alpha}$ has 
positive dimension for all $\alpha\in A$. 
\item For each leaf $L$ of $\SF_{\alpha}$ there exists some $x\in V_{\alpha}$ 
such that $L$ is contained in the $G$-orbit $G\cdot x$. 
\end{itemize}
A compatible system on a $G$-tangential compatible fibration is 
called {\it $G$-tangential compatible system} 
(or {\it tangential compatible system} for short), 
\end{definition}

\begin{example}
In \cite[Definition~6.7]{Fujita-Furuta-Yoshida2} a 
class of compatible fibration which is called {\it good compatible fibration} is defined. 
Any non-trivial torus action induces a good compatible fibration, which  
is a tangential compatible fibration. 
Moreover the product of two such good compatible fibrations 
is a tangential compatible fibration which is not good in general. 

Though the notion of $G$-tangential compatible fibration can be 
defined for arbitrary compact Lie group $G$, we do not know 
examples of tangential compatible fibration for non-Abelian group.  
\end{example}

Let $M$ be a Riemannian manifold and $W$ a $\Z/2$-graded $Cl(TM\oplus\R^{p,q})$-	module bundle. 
As in the same way in the proof Theorem~7.2 and 
Proposition~7.3 in \cite{Fujita-Furuta-Yoshida2}, 
we have the following.  

\begin{theorem}\label{def of local ind}
Suppose that $V$ is an open subset of $M$ whose complement is compact.  
If $V$ is equipped with a $G$-tangential acyclic compatible system $\{V_{\alpha}, \SF_{\alpha}, D_{\alpha}\}_{\alpha\in A}$, then  
we can define the 
local index $\ind(M, \{V_{\alpha}, \SF_{\alpha}, D_{\alpha}\}_{\alpha\in A},W)=
\ind(M,V,W)=\ind(M,V)$, which satisfies the excision formula, sum formula and product formula. 
\end{theorem}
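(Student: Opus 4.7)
The plan is to replicate the construction used in the proofs of \cite[Theorem~7.2, Proposition~7.3]{Fujita-Furuta-Yoshida2}, verifying that each step survives when the "good compatible fibration" hypothesis there is replaced by the weaker $G$-tangential hypothesis. Concretely, I would first fix a globally defined Dirac-type operator $D_M$ on $M$ acting on (an extension of) $W$, which exists since $M\setminus V$ is compact. Next, using $G$-averaging I would choose a $G$-invariant partition of unity $\{\rho_\alpha\}$ subordinate to the $G$-invariant cover $\{V_\alpha\}$, assemble the leafwise perturbation $D^+_V := \sum_\alpha \rho_\alpha D_\alpha$, cut it off into a globally defined symmetric zeroth-order perturbation of $D_M$, and define the local index as $\ind(D_M + tD^+_V)$ for $t$ sufficiently large.

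The analytic heart is showing $D_M + tD^+_V$ is Fredholm for $t \gg 0$, after which its index is independent of $t$ by continuity of the Fredholm index. Squaring yields
\[
(D_M + tD^+_V)^2 = D_M^2 + t^2 (D^+_V)^2 + t\{D_M, D^+_V\} + O(t),
\]
where the commutator terms produced by differentiating the cutoffs give only a first-order perturbation in $t$. The anti-commutators $\{D_\alpha, D_\beta\}$ entering $(D^+_V)^2$ are non-negative by the second acyclicity axiom, while the first acyclicity axiom, combined with compactness of leaves and standard elliptic theory leaf-by-leaf, produces a strictly positive leafwise spectral gap for $D^+_V$ on $V$. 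Converting this pointwise/leafwise gap into a \emph{uniform} lower bound on the end is precisely where the $G$-tangential hypothesis enters: each leaf sits inside a positive-dimensional orbit $G\cdot x$, and hence the family of leafwise Dirac operators can be viewed, after passing to finite coverings as in \cite[Lemma~3.4]{Fujita-Furuta-Yoshida2}, as a $G$-equivariant continuous family over a $G$-compact parameter space. Compactness of $G$ and continuous dependence of the leafwise spectrum then deliver the uniform gap, which in turn forces Fredholmness and gives $D^+_V$ no kernel on the end.

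The main obstacle will be exactly this uniform-gap step, which in \cite{Fujita-Furuta-Yoshida2} was handled by the rigid combinatorial structure of a good compatible fibration and must be rebuilt here out of the $G$-action. Once it is in place, independence of the resulting index from the choice of global extension $D_M$ and from the partition of unity follows by connecting any two choices through a path and invoking the homotopy invariance of the Fredholm index, exactly as in \cite[Proposition~7.3]{Fujita-Furuta-Yoshida2}. The excision, sum, and product formulas are then verified by the same local manipulations of the perturbed operator as in loc.\ cit., since those arguments only use the leafwise nature of $D^+_V$ and the acyclicity axioms, never the "good" hypothesis essentially. Independence from the choice of $G$-tangential acyclic compatible system itself is the later Theorem~\ref{well def of local ind}, deduced from the cobordism invariance.
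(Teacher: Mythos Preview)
Your broad outline---perturb a global Dirac operator by a leafwise term and show Fredholmness for large $t$---matches the paper's, but your identification of where the $G$-tangential hypothesis enters is off, and this creates a genuine gap.

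You argue that the uniform leafwise spectral gap comes from viewing the $D_\alpha$ as a \emph{$G$-equivariant} continuous family over a $G$-compact parameter space. But the paper explicitly does \emph{not} assume that $W$, $D$, or the $D_\alpha$ are $G$-equivariant (see the remark immediately after the theorem). Only the open covering and the foliations are tied to $G$; the operators themselves carry no symmetry. So there is no $G$-equivariant family to invoke, and compactness of $G$ alone does not bound the gap on a non-compact end: think of $V=S^1\times(0,\infty)$ with $G=S^1$ acting on the first factor, where the leafwise gap of a non-equivariant family $D_{S^1,r}$ can perfectly well tend to zero as $r\to\infty$.

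What the paper actually extracts from the $G$-tangential hypothesis is twofold. First, $G$-invariant cut-off functions $\rho_\alpha$ are automatically constant along leaves (each leaf sits in a $G$-orbit), which is what makes the estimates in \cite[\S4]{Fujita-Furuta-Yoshida2} go through. Second, and more importantly, the $G$-action lets one deform the end, together with all the data, to a \emph{cylindrical} end on which the compatible system has \emph{translational invariance}. It is this translational invariance---not $G$-compactness---that converts the pointwise acyclicity into a uniform lower bound and hence Fredholmness. Your proposal skips this deformation step entirely.

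Two smaller points: the perturbation should be $\sum_\alpha \rho_\alpha D_\alpha \rho_\alpha$ (as in the paper), since $\sum_\alpha \rho_\alpha D_\alpha$ is not formally self-adjoint; and the leafwise $D_\alpha$ are first-order differential operators, not zeroth-order, so calling the perturbation ``zeroth-order'' is incorrect and affects how you organize the square.
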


The resulting index is a representation of $Cl_{p,q}$, 
i.e., an element of the $K$-group with $Cl_{p,q}$-action $K(pt, Cl_{p,q})$, 
where for any locally compact Hausdorff space $X$,  $K$-group 
$K(X, Cl_{p,q})$ is defined as the Grothendieck group of 
the semigroup consisting of equivalence classes of complex vector bundles with 
fiberwise $Cl_{p,q}$-action.  
There is an equivalent description of $K(X,Cl_{p,q)}$. 
Let $\{e_1,\cdots, e_p, e_1',\cdots, e_q'\}$ be the standard basis of $\R^{p,q}$. 
Let $G_{p,q}$ be the finite group generated by 
$\{e_1,\cdots, e_p, e_1',\cdots, e_q', \delta\}$
with relations 
$$
\delta^2=1, \ 
\delta e_i=e_i \delta, \ \delta e_{i'}'=e_{i'}'\delta, \ 
e_i^2=e_{i'}'^2=\delta, \ 
e_ie_je_i^{-1}e_j^{-1}=e_ie_{i'}'e_i^{-1}e_{i'}'^{-1}=
e_{i'}'e_{j'}'e_{i'}'^{-1}e_{j'}'^{-1}=\delta
$$ for $i,j =1,\cdots, p$ ($i\neq j$), $i', j'=1,\cdots, q$ ($i'\neq j'$). 
Note that a representation of $Cl_{p,q}$ is equivalent to a 
representation of $G_{p,q}$ such that $\delta$ acts as $-{\rm id}$. 
In terms of this group, $K(X, Cl_{p,q})$ can be identified with the 
subgroup of $G_{p,q}$-equivariant K-group  $K_{G_{p,q}}(X)$ consisting of 
vector bundles $E$ on which $\delta$ acts as $-{\rm id}$. 

Let us briefly recall the definition of the local index $\ind(M,V,W)$.  
Let $D:\Gamma(W)\to \Gamma(W)$ be a Dirac-type operator. 
% of odd degree. , i.e., $D$ is a formally self-adjoint degree-one operator of order-one whose principal symbol is given by the Clifford action of $Cl(TM)(\subset Cl(TM\oplus \R^{p,q}))$.  
We may assume that $D$ anti-commutes with $Cl_{p,q}(\subset Cl(TM\oplus \R^{p,q}))$-action by taking the average by the finite group $G_{p,q}$. 
We consider 
the perturbation $D+t\sum_{\alpha\in A}\rho_{\alpha}D_{\alpha}\rho_{\alpha}$ for $t\gg 1$, where $\{\rho_{\alpha}\}_{\alpha\in A}$ is a family of smooth cut-off functions 
which is constant along leaves of $\SF_{\alpha}$ and satisfies 
some estimates as in \cite[Subection~4.1]{Fujita-Furuta-Yoshida2}. 
Such a perturbation gives a Fredholm operator on the space of $L^2$-sections of $W$. 
In \cite[Section~7]{Fujita-Furuta-Yoshida2} we explained the details of the definition of $\ind(M,V)$ for good compatible fibrations arising from torus action. 
The construction can be generalised for tangential compatible fibrations 
which is not necessarily good. 
The point is as follows. 
\begin{itemize}
\item We can take a family of smooth cut-off functions $\{\rho_{\alpha}\}_{\alpha\in A}$ which satisfies the following.  
%as in Lemma~\ref{G-inv p.o.u}. 
%\begin{equation}
\begin{itemize}
\item $\rho(x)+\sum_{\alpha}\rho_{\alpha}(x)>0$ for all $x\in M$. 
%If $\rho_{\alpha}^0(x)>0$, then we have $\rho_{\alpha}(x)>0$. 
\item For each $\alpha\in A$ we have $\supp(\rho_{\alpha})\subset V_{\alpha}$. 
\item Each $\rho_{\alpha}$ is $G$-invariant.  
\end{itemize}
\item  We can deform the end of $V$ using a proper function 
 together with $\{V_{\alpha}, \SF_{\alpha}, D_{\alpha}\}_{\alpha\in A}$ so that the end has a cylindrical end and 
the resulting deformed compatible system has translational invariance.  
%\end{equation}
\end{itemize}

\begin{remark}
When $V$ has several connected components $V_{(1)}, V_{(2)}, \cdots, V_{(k)}$ 
we can generalize the definition of $G$-tangential compatible system in such a way that there exists a family of compact Lie groups $\{G_{(1)}, G_{(2)}, \cdots , G_{(k)}\}$ 
and each $G_{(i)}$ acts on the component $V_{(i)}$. 
Moreover using a suitable technical conditions for $\{V_{\alpha}\}_{\alpha\in A}$ it may be possible to generalize in such a way that 
there exists a family of compact Lie groups $\{G_{\alpha}\}_{\alpha\in A}$ and 
each $G_{\alpha}$ acts on $V_{\alpha}$. 
\end{remark}

\begin{remark}
We do not assume that $W$, $D$ and $D_{\alpha}$ are $G$-equivariant. 
We only use the group action to have cut-off functions and 
to deform the end so that the functions are constant along each leaves 
and we can extend the restriction of the data to the cylindrical end. 
\end{remark}

\begin{remark}
All constructions in this section can work for real Clifford module bundles. 
In fact we use real Clifford module bundle case latter. 
\end{remark}

%%%%%%%%%%%%%%%%%%%%%%%%%%%%%%%%%%%%%%%%%%%%%%%%%%%%%%%%%%%%%%%
\section{Cobordism and cobordism invariance of the local index}
\label{Cobordism and cobordism invariance of the local index}

In this section we give the definitions of cobordisms of 
compatible fibrartions and compatible systems in a natural way. 
We also introduce the notion of acyclicity of the cobordism, 
and by using it we give the statement of our main theorem, 
{\it cobordism invariance of the local index}. 

\subsection{Cobordism of compatible fibrations}
\label{Cobordism of compatible fibrations}

Let $V_1$ and $V_2$ be smooth manifolds. 
\begin{definition}\label{coboffib}
Two compatible fibrations 
$\{V_{1,\alpha}, \SF_{1,\alpha}\}_{\alpha\in A}$ on $V_1$ 
and $\{V_{2,\beta}, \SF_{2,\beta}\}_{\beta\in B}$ on $V_2$ 
are {\it cobordant} if there exists a datum 
$\{V,V_{\gamma}, \SF_{\gamma}\}_{\gamma\in C}$ satisfying the 
following conditions. 
\begin{itemize}
\item[(i)] $V$ is a smooth manifold with 
$\partial V=V_1\cup V_2$. 
\item[(ii)] $\{V_{\gamma}, \SF_{\gamma}\}_{\gamma\in C}$ is a compatible fibration 
on $V$. 
\item[(iii)] There exists a tubular neighborhod $V_0$ of $\partial V$ 
which is diffeomorphic to $V_1\times (-\varepsilon,0] \cup 
V_2\times [0,\varepsilon)$ for some small $\varepsilon>0$ and 
 $V_0\cap V_{\gamma}$ consists of leaves of 
$\SF_{\gamma}$ for each $\gamma\in C$. 
\item[(iv)]
Under the diffeomorphism in (iii) 
the restriction of the foliation $\SF_{\gamma}|_{V_0\cap V_{\gamma}}$ for each $\gamma\in C$ 
is isomorphic to the foliation on $V_{1,\alpha}\times (-\varepsilon, 0]\cap V_{\gamma}$ or $V_{2, \beta}\times [0,\varepsilon)\cap V_{\gamma}$ induced from $\SF_{1,\alpha}$ or $\SF_{2,\beta}$ 
for some $\alpha\in A$ or $\beta\in B$.
In particular each leaf of $\SF_{\gamma}|_{V_0\cap V_{\gamma}}$ is 
diffeomorphic to the product of an interval and some leaf of 
$\SF_{1,\alpha}$ or $\SF_{2,\beta}$. 
\end{itemize} 
We call $\{V,V_{\gamma}, \SF_{\gamma}\}_{\gamma\in C}$ a 
{\it cobordism} between 
$\{V_{1,\alpha}, \SF_{1,\alpha}\}_{\alpha\in A}$ 
and $\{V_{2,\beta}, \SF_{2,\beta}\}_{\beta\in B}$. 
\end{definition}

%%%%%%%%%%%%%%%%%%%%%%%%%%%%%%%%%%%%%%%%%%%%%%%%%%%%%%%%%%%%%%%%%%
\subsection{Cobordism of compatible systems}
\label{Cobordism of compatible systems}

%Let $V$ be a smooth Riemannian manifold and 
%$\{V_{\alpha}, \SF_{\alpha}\}_{\alpha\in A}$ a compatible fibration on $V$. 
%Let $W\to V$ be a $\Z/2$-graded $Cl(TV)$-module bundle over $V$. 

Let $V_1$ and $V_2$ be smooth Riemannian manifolds. 
For $i=1,2$ let $W_i\to V_i$ be $\Z/2$-graded $Cl(TV_i\oplus \R^{p,q})$-module bundles. 
\begin{definition}\label{cobofsys}
Two compatible systems 
$\{V_{1,\alpha}, \SF_{1,\alpha}, W_1, D_{1,\alpha}\}_{\alpha\in A}$ on $V_1$ and 
$\{V_{2,\beta}, \SF_{2,\beta}, W_2, D_{2,\beta}\}_{\beta\in B}$ on $V_2$ are 
{\it cobordant} if there is a collection of data 
$\{V,V_{\gamma}, \SF_{\gamma}, W, D_{\gamma}\}_{\gamma\in C}$ 
which satisfies the following conditions. 
\begin{itemize}
\item[(i)] $\{V,V_{\gamma}, \SF_{\gamma}\}_{\gamma\in C}$ is a 
cobordism between $\{V_{1,\alpha},\SF_{1,\alpha}\}_{\alpha\in A}$ and 
$\{V_{2,\beta}, \SF_{2,\beta}\}_{\beta\in B}$. 
\item[(ii)] The diffeomorphism in Definition~\ref{coboffib}(iii) 
is isometric. 
\item[(iii)] $W$ is a $Cl(TV\oplus \R^{p,q})$-module bundle over $V$ 
(without $\Z/2$-grading). 

%\item[(iv)] $D_{\gamma}$ is a Dirac-type operator along leaves of $\SF_{\gamma}$ for each $\gamma$, i.e., $D_{\gamma}$ is an order-one formally self-adjoint operator whose principal symbol is given by the Clifford action of the directions of leaves. 
%, i.e., 
%$D_{\gamma}$ is a formally self-adjoint first order differential operator 
%which acts on $\Gamma(W|_{V_{\gamma}})$ and 
%contains only derivatives along leaves of $\SF_{\gamma}$ whose 
%symbol is given by the Clifford multiplication of the direction of 
%leaves. 
%\item[(v)] 
%For a leaf $L\in \SF_\gamma$ let $\tilde u\in \Gamma (\nu_\gamma|_L)$ be a section of $\nu_\gamma|_L$ parallel along $L$. 
%For $b\in U_{\alpha}$ and $u\in T_bU_{\alpha}$, let $\tilde{u}\in \Gamma(TV_{\alpha}|_{\pi^{-1}_{\alpha}(b)})$ be the horizontal lift of $u$ with respect to the Riemannian metric $g|_{V_\alpha}$. 
%$\tilde{u}$ acts on $W|_L$ by the Clifford multiplication $c(\tilde{u})$. Then $%D_{\gamma}$ and $c(\tilde{u})$ anti-commute each other. 
%, i.e. 
%\[
%0=\{ D_{\alpha},c(\tilde{u}) \}:=
%D_{\alpha}\circ c(\tilde{u})+c(\tilde{u})\circ D_{\alpha}
%\]
%as an operator on $W|_L$. 
\item[(iv)] $\{D_{\gamma}\}_{\gamma\in C}$ is a compatible system on $(\{V_{\gamma}, \SF_{\gamma}\}_{\gamma}, W)$. 
\item[(v)]
$W|_{V_0}\to V_0$ is isometric to 
$W_1\times(-\varepsilon, 0]\cup W_2\times [0,\varepsilon)
\to V_1\times(-\varepsilon, 0]\cup V_2\times [0,\varepsilon)$, 
where $V_0$ is the tubular neighborhood of 
$\partial V$ as in Definition~\ref{coboffib}(ii). 
\item[(vi)] Under the identification in (v) we have 
$c(dr)|_{W_i^{\pm}}=\pm\sqrt{-1}$ for $i=1,2$, where $c$ is 
the Clifford multiplication of $W$ and $r$ is the standard 
coordinate of $(-\varepsilon, 0]\cup[0,\varepsilon)$. 
\item[(vii)] Under the identification  in (v) 
$D_{\gamma}|_{V_0\cap V_{\gamma}}$ for each $\gamma$ 
is identified with $D_{1,\alpha}$ or $D_{2,\beta}$ 
for some $\alpha$ or $\beta$. 
\end{itemize}
We call the data $\{V,V_{\gamma}, \SF_{\gamma}, W, D_{\gamma}\}_{\gamma\in C}$ 
a {\it cobordism between} 
$\{V_{1,\alpha}, \SF_{1,\alpha}, W_1, D_{1,\alpha}\}_{\alpha\in A}$ and 
$\{V_{2,\beta}, \SF_{2,\beta}, W_2, D_{2,\beta}\}_{\beta\in B}$. 
\end{definition}

\begin{definition}\label{acysyscob}
A cobordism $\{V,V_{\gamma}, \SF_{\gamma}, W, D_{\gamma}\}_{\gamma\in C}$  is called {\it acyclic} if 
the compatible system $\{D_{\gamma}\}_{\gamma\in C}$ is acyclic. 
%the following conditions are satisfied. 
%\begin{itemize}
%\item[(i)] $\ker(q_L^*D_{\gamma}|_{\pi^{-1}(\tilde b)})=\{0\}$ 
%for each $\gamma\in C$, $L\in\SF_{\gamma}$ and $b\in \tilde U_{\gamma}$. 
%\item[(ii)] If $V_{\gamma}\cap V_{\gamma'}\neq\emptyset$ for $\gamma, \gamma'\in C$, 
%then $D_{\gamma}D_{\gamma'}+D_{\gamma'}D_{\gamma}\geq 0$. 
%\end{itemize}
\end{definition}

%%%%%%%%%%%%%%%%%%%%%%%%%%%%%%%%%%%%%%%%
\subsection{Cobordism invariance of local index}
\label{Cobordism invariance of local index} 
%Consider the following setting. 
%\\
Let $M_i$ be Riemannian manifolds and 
$W_i$ $\Z/2$-graded $Cl(TM_i\oplus\R^{p,q})$-module bundles over $M_i$ for $i=1,2$. 
Let $M$ be a Riemannian manifold and 
$W$ a $Cl(TM\oplus \R^{p,q})$-module bundle over $M$ 
without $\Z/2$-grading.  
%with $\partial\tilde M=M$. Let  
Suppose that the following conditions are satisfied. 

\medskip

\begin{itemize}
\item[(i)] $\partial M=M_1\cup M_2$. 
\item[(ii)] For each $i=1,2$ there exists an open subset $V_i$ of $M_i$ which 
has compact complement and 
is equipped with an acyclic $G$-tangential compatible systems 
$\{V_{i,\alpha}, \SF_{i,\alpha}, W|_{V_i}, D_{i,\alpha}\}$ 
for a compact Lie group $G$. 
\item[(iii)] There exists an open subset $V$ of $M$ which 
has compact complement and is 
equipped with a $G$-tangential acyclic compatible system 
$\{V_{\alpha}, \SF_{\alpha}, W|_{V}, D_{\alpha}\}$. 
\item[(iv)] $\{V,V_{\gamma}, \SF_{\gamma}, 
W|_{V}, D_{\gamma}\}$ is a cobordism between 
$\{V_{1,\alpha}, \SF_{1,\alpha}, W|_{V_1}, D_{1,\alpha}\}$ and 
$\{V_{2,\beta}, \SF_{2,\beta}, W|_{V_2}, D_{2,\beta}\}$. 
\item[(v)] The isometric in Definition~\ref{coboffib}(ii) is 
$G$-equivariant. 
\end{itemize}

\medskip

%\begin{theorem}
%\label{cobinv}
%Under the above assumption if $M_2=\emptyset$, then we have 
%$$
%\ind(M_1,V_1)=0. 
%$$
%\end{theorem}

The following is a main theorem in this paper, which will be 
reformulated and shown in the subsequent sections. 
\begin{theorem}\label{maintheorem}
Under the above setting we have 
$$
\ind(M_1,V_1)=\ind(M_2,V_2). 
$$
\end{theorem}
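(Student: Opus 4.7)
The plan is to follow the three-stage strategy signalled in the introduction: reformulate via Morita equivalence, pass to an extended manifold with cylindrical ends, and then apply a Witten-type deformation using the extra Clifford symmetry.

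First I would exploit the Morita equivalence recalled in Section~\ref{Clifford algebras for indefinite metrics}, together with condition~(vi) of Definition~\ref{cobofsys}, which says that on the collar $c(dr)$ realizes the grading operators of $W_1$ and $W_2$ (up to the factor $\sqrt{-1}$ in the complex case, and via a real analogue in the real case). This suggests attaching half-infinite cylinders $M_1\times(-\infty,0]$ and $M_2\times[0,\infty)$ to $M$ along the collar diffeomorphisms of Definition~\ref{coboffib}(iii) and extending the metric, the bundle $W$, the Dirac operator, the acyclic compatible system $\{V_\alpha,\SF_\alpha,D_\alpha\}$, and the action $c(dr)$ translation-invariantly along the new cylindrical directions; conditions~(v)--(vii) of Definition~\ref{cobofsys}, together with the $G$-equivariance of the collar (hypothesis~(v) of the main theorem), make these extensions canonical and $G$-equivariant. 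On the resulting complete Riemannian manifold $\hat M$ a standard sum/excision argument along the hypersurfaces $M_i\times\{0\}$ identifies the ungraded local index of $\hat M$ with $\ind(M_1,V_1)-\ind(M_2,V_2)$, the relative sign being absorbed in the collar convention. The statement of Theorem~\ref{maintheorem} is therefore equivalent to the vanishing of that local index on $\hat M$, packaged as Theorem~\ref{cobinvref} in the complex case and Theorem~\ref{cobinvreal} in the real case.

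Second, to prove this vanishing I would introduce a two-parameter Witten-type deformation of the form $\hat D+t\sum_\alpha\rho_\alpha D_\alpha\rho_\alpha+s\,f\,c(dr)$, where $f$ is a $G$-invariant proper function on $\hat M$ that vanishes on $M$ and grows linearly along each cylinder. The new term $s\,f\,c(dr)$ uses precisely the extra Clifford generator identified in stage one. Squaring the perturbed operator and using that $c(dr)$ anticommutes with the principal symbol of $\hat D$ and (by the translation-invariance of the extension together with Definition~\ref{compatible system}(4)--(5)) with each tangential operator $D_\alpha$, one obtains a positive term of order $s|f|$ plus bounded curvature-like terms; for $s,t\gg1$ this forces all small eigenvectors to concentrate in a compact region of $\hat M$. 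Within that region the free anticommuting generator $c(dr)$ commutes with the $Cl_{p,q}$-action and produces an involution pairing positive- and negative-parity parts of the kernel, which kills the resulting class in $K(\mathrm{pt},Cl_{p,q})$ (in the real case via the description by $G_{p,q}$-equivariant K-theory recalled after Theorem~\ref{def of local ind}). This is the real/indefinite-metric version of the Braverman cancellation \cite{Bravermancobinv} and constitutes Theorem~\ref{cobinvreal}.

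The main obstacle I expect is the joint spectral-concentration estimate on $\hat M$. Two perturbations must coexist: $t\sum\rho_\alpha D_\alpha\rho_\alpha$ controls the acyclic end $V$ while $s\,f\,c(dr)$ controls the new cylindrical ends; one must show that for all sufficiently large $s,t$ they together give a Fredholm operator whose sub-threshold spectrum is supported in a fixed compact set independent of $(s,t)$, so that the extra-symmetry cancellation applies. This is where the $G$-tangential hypothesis on both the $V_i$ and the cobordism $V$, together with the $G$-equivariance of the collar identification, is crucial: it allows a simultaneous choice of $G$-invariant cut-offs $\rho_\alpha$ and of a $G$-invariant proper function $f$ whose derivatives can be controlled uniformly along leaves, so that the cross-terms in the square of the perturbed operator remain of lower order than the dominant positive terms. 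Once these estimates are in place, the cancellation argument is purely algebraic and the real case follows by the same scheme as the complex one, after translating $Cl_{p,q}$-equivariance into $G_{p,q}$-equivariance.
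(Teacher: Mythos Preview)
Your overall plan---a Witten-type deformation together with an extra Clifford symmetry that forces the cobordism index to vanish---is the paper's strategy, but your implementation of the ``Morita equivalence'' step has a genuine gap. In the paper that step is not the observation that $c(dr)$ encodes the boundary grading; it is the act of \emph{tensoring} $W$ with an irreducible $Cl_{1,2}$-module $R$ (or $R_{\C}$) to obtain $\hat W=W\otimes R$, which then carries a \emph{global} action of three new generators $e_0,e_1,\epsilon$ (each with negative self-product). This supplies three things your outline lacks. First, a global $\Z/2$-grading on $\hat W$ over the entire cobordism (via $c(\epsilon)$), without which your ``ungraded local index of $\hat M$'' is undefined. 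Second, a globally defined Hermitian generator $c(e_1)$ with $c(e_1)^2=+1$, so that the potential term $-c((p(x)-a)e_1)$ squares to the positive $|p(x)-a|^2$ of Proposition~\ref{laplacian}; by contrast your $c(dr)$ lies in the positive-definite part of the metric, so $c(dr)^2=-1$ and $(s\,f\,c(dr))^2=-s^2f^2$ has the wrong sign---the cross term with $\hat D$ contributes only a bounded zeroth-order piece in $f'$, not the growing $s|f|$ you assert. Third, a global odd operator (built from the new generators) that implements the parity cancellation of Proposition~\ref{indDta=0}; your candidate $c(dr)$ exists only on the cylinders, whereas even granting your concentration claim the small-eigenvalue sections would live near $M$, where $c(dr)$ is not defined.

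There is a second gap in your ``standard sum/excision'' link between the extended-manifold index and the boundary indices. In the paper this is the substantive analytic step: one introduces a model operator $\bD_t^{\rm mod}$ on $M_0\times\R$, computes $\ind(\bD_t^{\rm mod})=2\,\ind(M_0,V_0)$ by separation of variables (Proposition~\ref{kermod=kerDt}), and then proves $\ind(\bD_{t,a})=\ind(\bD_t^{\rm mod})$ for $a\gg0$ via an IMS localization with cutoffs of scale $\sqrt a$ (Proposition~\ref{keyprop}). The two parameters are not varied jointly: one first fixes $t$ large enough for the acyclic perturbation to control the $V$-end (Proposition~\ref{laplacianestimate}), and then sends the translation parameter $a$ to infinity. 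Your sketch does not supply this comparison, and the excision formula alone does not bridge the gap, since $\bD_{t,a}$ is not of product type near any fixed hypersurface once the potential is turned on.
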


%%%%%%%%%%%%%%%%%%%%%%%%%%%%%%%%%%%%%%%%%%%%%%%%%%%%%%%%%%%%%%%%%%%%%%%%%
\section{Cobordism invariance and extra symmetry of Clifford algebras}
\label{Cobordism invariance via extra symmetry of Clifford algebras} 
In this section we give a reformulation of the cobordism invariance 
by using an extra symmetry of Clifford algebras for indefinite metrics. 
In Subsection~\ref{Complex Clifford module bundle case} 
we give the reformulation for the 
complex case (Theorem~\ref{cobinvref}). 
We also give in Subsection~\ref{Real Clifford module bundle case} 
a formulation of cobordism invariance of local index for 
real Clifford module bundles (Theorem~\ref{cobinvreal}).  
We will show the real case in the next section.  
In this section we consider the following set-up. 

\medskip

\noindent
{\bf Set-up.} \ 
$M$ is a Riemannian manifold and 
$V$ is an open subset of $M$. 
Suppose that $V$ is equipped with a $G$-tangential 
compatible fibration $\{V_{\gamma}, \SF_{\gamma}\}_{\gamma\in\hat A}$ 
for a compact Lie group $G$. 
Moreover we assume that there exists a smooth function $f:M\to \R$ 
such that $f|_{M\setminus V}$ is proper and $f|_V$ is $G$-invariant.

\subsection{Complex Clifford module bundle case}
\label{Complex Clifford module bundle case} 
Let $\hat W\to M$ be 
a {\it complex} $Cl(TM\oplus\R e_L\oplus\R^{p,q}\oplus\R e_R\oplus\R e_R')$-module bundle, 
where the metric is defined by 
$e_L\cdot e_L=1$ and $e_R\cdot e_R=e_R'\cdot e_R'=-1$. 
Suppose that there exists a $G$-tangential acyclic compatible system 
$\{V_{\gamma}, \SF_{\gamma}, \hat W|_V, \hat D_{\gamma}\}_{\gamma\in \hat A}$ on $V$. 
Let $(R_{\C},c_{1,2})$ be an irreducible complex representation of 
$Cl(\R e_L\oplus \R e_R\oplus \R e_R')=Cl_{1,2}$ 
defined by $R_{\C}:=\wedge^{\bullet}\C=\C\oplus\C$ and 
$$
c_{1,2}(e_L)=\begin{pmatrix} 0 &  -1 \\ 1 & 0 
\end{pmatrix}, \quad 
c_{1,2}(e_R)=\begin{pmatrix} 0 &  1 \\ 1 & 0
\end{pmatrix}, \quad 
c_{1,2}(e_R')=\begin{pmatrix} 1 & 0 \\ 0 & -1
\end{pmatrix}. 
$$
We define $Cl(TM\oplus \R^{p,q})$-module bundle 
$$
W:={\rm Hom}_{Cl_{1,2}}(R_{\C},\hat W)\to M. 
$$ 
Note that we have 
$W\otimes R_{\C}\cong \hat W$ as $Cl(TM\oplus \R^{p,q})\otimes Cl_{1,2}\cong 
Cl(TM\oplus \R e_L\oplus \R^{p,q}\oplus\R e_R\oplus\R e_R')$-module bundles. 
For each regular value $r$ of $f$, we put $M_r:=f^{-1}(r)$ and 
$V_r:=M_r\cap V$. 
Then $W_r:=W|_{M_r}$ has a structure of $Cl(TM_r\oplus \R v_r\oplus \R^{p,q})$-module bundle, where 
$v_r$ is the unit tangent vector which is normal to $TM_r$ with the increasing direction of $f$. 
Since we have $c(v_r)^2=-1$, we can define $\Z/2$-grading $W_r=W_r^+\oplus W_r^-$ by $c(v_r)|_{W_r^{\pm}}=\pm \sqrt{-1}$. 
For each $\alpha\in \hat A$ the Dirac-type operator along leaves $\hat D_{\alpha}$ induces an another Dirac-type operator along leaves $D_{\alpha}:\Gamma(W|_{V_{\alpha}})\to\Gamma(W|_{V_\alpha})$ defined by 
$(D_{\alpha}s)(r):=\hat D_{\alpha}(s(r))$ for $s\in \Gamma(W)$ and $r\in R$. 
Let $A_r$ be the subset of $\hat A$ defined by 
$A_r=\{\alpha\in\hat A \  | \ V_{\alpha}\cap M_r\neq \emptyset\}$. 
For $\alpha\in A_r$ we put $V_{r,\alpha}=V_{\alpha}\cap V_r$ and 
define $D_{r,\alpha}:\Gamma(W|_{V_{r,\alpha}})\to 
\Gamma(W|_{V_{r,\alpha}})$ by the restriction 
of $D_{\alpha}$. 
Since $f|_V$ is $G$-invariant we have the $G$-tangential compatible 
fibration $\{\SF_{r,\alpha}\}_{\alpha\in A_r}$ and compatible system 
$\{D_{r,\alpha}\}_{\alpha\in A_r}$ on $V_r=M_r\cap V$. 
The following is clear by definition of $M_r$ and $D_{\alpha}$. 

\begin{lemma}\label{induced comp sys}
The $G$-tangential compatible system 
$\{V_{r,\alpha}, \SF_{r,\alpha}, W_r, D_{r,\alpha}\}_{\alpha\in A_r}$ 
over $V_r$ is acyclic. 
\end{lemma}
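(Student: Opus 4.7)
\emph{Proof plan.} The strategy is to split the verification of the two acyclicity conditions of Definition~\ref{strongly acyclic} into (a) a Morita-equivalence step that identifies $D_\alpha$ with a tensor factor of $\hat D_\alpha$ and (b) a level-set step that shows $D_{r,\alpha}$ agrees with $D_\alpha$ on each leaf of $\SF_{r,\alpha}$. Both steps preserve acyclicity in an essentially fiberwise manner, so the conclusion reduces to the hypothesis that $\{\hat D_\gamma\}_{\gamma \in \hat A}$ is acyclic on $V$.

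\emph{Step (a): Morita equivalence.} Under the canonical isomorphism $\hat W \cong W \otimes R_{\C}$ of $Cl(TM \oplus \R e_L \oplus \R^{p,q} \oplus \R e_R \oplus \R e_R')$-module bundles, the formula $(D_\alpha s)(r) = \hat D_\alpha(s(\cdot)(r))$ exhibits $\hat D_\alpha = D_\alpha \otimes \mathrm{id}_{R_{\C}}$ as differential operators along leaves of $\SF_\alpha$. Consequently, for a leaf $L$, its finite cover $q_L$, and $\tilde b \in \tilde U_L$, the operator $q_L^* \hat D_\alpha|_{\pi_L^{-1}(\tilde b)}$ factors as $\bigl(q_L^* D_\alpha|_{\pi_L^{-1}(\tilde b)}\bigr) \otimes \mathrm{id}_{R_{\C}}$, so its kernel equals $\Ker\bigl(q_L^* D_\alpha|_{\pi_L^{-1}(\tilde b)}\bigr) \otimes R_{\C}$. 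Vanishing of the former therefore forces vanishing of the latter. Analogously $\{\hat D_\alpha, \hat D_\beta\} = \{D_\alpha, D_\beta\} \otimes \mathrm{id}_{R_{\C}}$, so non-negativity of the left-hand anti-commutator entails that of the right.

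\emph{Step (b): Reduction to level sets.} Because $\{V_\gamma, \SF_\gamma\}$ is $G$-tangential, each leaf of $\SF_\alpha$ is contained in a single $G$-orbit; since $f|_V$ is $G$-invariant, $f$ is constant on each leaf. Hence any leaf of $\SF_\alpha$ meeting $M_r$ sits entirely in $M_r$, and the leaves of $\SF_{r,\alpha}$ are precisely the leaves of $\SF_\alpha$ lying in $V_r$. As $D_\alpha$ differentiates only along leaves, $D_{r,\alpha}$ is literally the restriction of $D_\alpha$ to such leaves: one gets $q_L^* D_{r,\alpha}|_{\pi_L^{-1}(\tilde b)} = q_L^* D_\alpha|_{\pi_L^{-1}(\tilde b)}$ for any leaf $L \subset V_{r,\alpha}$, and $\{D_{r,\alpha}, D_{r,\beta}\}$ on $V_{r,\alpha} \cap V_{r,\beta}$ is the restriction of $\{D_\alpha, D_\beta\}$. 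Combining steps (a) and (b) transports both acyclicity conditions from $\{\hat D_\gamma\}$ to $\{D_{r,\alpha}\}_{\alpha \in A_r}$.

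\emph{Main obstacle.} The bulk of the argument is formal bookkeeping; the only substantive points are the compatibility between the Morita equivalence $\hat W \cong W \otimes R_{\C}$ and the leafwise form of $\hat D_\alpha$, and the fact that the $G$-invariance of $f|_V$ forces leaves to lie inside level sets rather than merely be transverse to them. Both are immediate from the definitions but need to be recorded explicitly, after which no analytic input beyond the hypothesised acyclicity of $\{\hat D_\gamma\}$ is required.
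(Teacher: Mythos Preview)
Your proposal is correct and amounts to a careful unpacking of what the paper leaves implicit: the paper simply asserts that the lemma is ``clear by definition of $M_r$ and $D_\alpha$'' and gives no further argument. Your Step~(a) (the Morita identification $\hat D_\alpha \leftrightarrow D_\alpha \otimes \mathrm{id}_{R_\C}$) and Step~(b) (that $G$-invariance of $f|_V$ forces leaves to lie inside level sets, so $D_{r,\alpha}$ is literally $D_\alpha$ restricted leafwise) are precisely the two observations one must make to see this, so your argument and the paper's are the same in spirit---you have just made explicit what the author deemed obvious.
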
 
Since $M_r\setminus V_r$ is compact  
the local index $\ind(M_r, V_r,W_r)$ can be defined 
as an element of the $K$-group with $Cl_{p,q}$-action 
$K(pt,Cl_{p,q})$. 

\begin{theorem}\label{cobinvref}
The local index $\ind(M_{r}, V_{r},W_{r})$ does not depend on the 
choice of the regular value $r$. 
\end{theorem}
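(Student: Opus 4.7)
The plan is to deduce the theorem from the cobordism invariance of the local index (Theorem~\ref{maintheorem}). For any two regular values $r_1 < r_2$ of $f$, the sublevel cylinder $M' := f^{-1}([r_1, r_2])$ is a smooth manifold with $\partial M' = M_{r_1} \sqcup M_{r_2}$. I will exhibit $M'$, together with a suitable restriction of the ambient data on $V$, as a cobordism in the sense of Definition~\ref{cobofsys} between the data on $M_{r_1}$ and on $M_{r_2}$; Theorem~\ref{maintheorem} will then conclude.

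\textbf{Constructing the cobordism.} Set $V' := V \cap M' = (f|_V)^{-1}([r_1, r_2])$ and define the restricted cobordism data by $V'_\gamma := V_\gamma \cap V'$, $\SF'_\gamma := \SF_\gamma|_{V'_\gamma}$, $W' := W|_{V'}$ (regarded without $\Z/2$-grading), and $D'_\gamma$ the operator induced on $W'|_{V'_\gamma}$ from $\hat D_\gamma$, exactly as $D_\alpha$ is induced from $\hat D_\alpha$ in Subsection~\ref{Complex Clifford module bundle case}. Since $f|_V$ is $G$-invariant, $V'$ is $G$-invariant; since $f|_{M \setminus V}$ is proper, $M' \setminus V' = (f|_{M \setminus V})^{-1}([r_1, r_2])$ is compact. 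The essential mechanism is $G$-tangentiality combined with $G$-invariance of $f|_V$: each leaf of $\SF_\gamma$ is contained in a $G$-orbit, and $f$ is constant along each $G$-orbit, so every leaf lies inside a single level set of $f$. In particular no leaf crosses $\partial M'$, and inside a collar of each $M_{r_i}$ the foliation $\SF'_\gamma$ splits as the product of $\SF_{r_i, \gamma}$ with the interval factor, which is precisely condition (iv) of Definition~\ref{coboffib}. Because $\hat D_\gamma$ differentiates only along leaves, which are tangent to the level sets, its restriction to $V_{r, \gamma}$ coincides with $D_{r, \gamma}$, so condition (vii) of Definition~\ref{cobofsys} holds. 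Acyclicity of the restricted system is inherited directly from that of the original.

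\textbf{Grading match and main obstacle.} The grading $c(v_r)|_{W_r^\pm} = \pm\sqrt{-1}$ satisfies condition (vi) of Definition~\ref{cobofsys} under the identification $V_1 \leftrightarrow V_{r_2}$ and $V_2 \leftrightarrow V_{r_1}$: at $M_{r_2}$ the vector $v_{r_2}$ is the outward unit normal of $M'$, while at $M_{r_1}$ it is the inward unit normal, matching the mirrored sign conventions of the two collars $V_1 \times (-\varepsilon, 0]$ and $V_2 \times [0, \varepsilon)$. The genuinely technical ingredient, and the main obstacle, is arranging the isometric product structures demanded by conditions (ii), (iii), and (v) of Definition~\ref{cobofsys}: a priori the Riemannian metric on $M$ and the Clifford module bundle $W$ are not products near $\partial M'$. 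My plan is to use the flow of $\nabla f / |\nabla f|^2$ near each regular level set $M_{r_i}$ to build a smooth collar $M_{r_i} \times (r_i - \varepsilon, r_i + \varepsilon)$, and then deform the metric and the Clifford structure in a compact $G$-invariant subregion of this collar to genuine products extending $g|_{M_{r_i}}$ and $W|_{M_{r_i}}$ along the flow. Such a deformation passes through Riemannian and Clifford data that continue to satisfy the hypotheses of Theorem~\ref{def of local ind}; hence, by the homotopy invariance of the Fredholm index underlying the definition of the local index, $\ind(M_{r_i}, V_{r_i}, W_{r_i})$ is unchanged. Once the products are in place, every condition of Definition~\ref{cobofsys} holds and Theorem~\ref{maintheorem} yields $\ind(M_{r_1}, V_{r_1}, W_{r_1}) = \ind(M_{r_2}, V_{r_2}, W_{r_2})$, as required.
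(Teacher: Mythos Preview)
Your argument is circular in the paper's logical structure. You invoke Theorem~\ref{maintheorem} to deduce Theorem~\ref{cobinvref}, but in the paper Theorem~\ref{maintheorem} is \emph{not} proved independently: immediately after stating Theorem~\ref{cobinvref} the paper explains that the two theorems are equivalent (via $\hat W \leftrightarrow W\otimes R_{\C}$), and the actual content of the proof is the direct analytic argument carried out in Section~\ref{Proof of Theorem{realcobinv}} (for the real analogue Theorem~\ref{cobinvreal}, with the modifications for the complex case in Subsection~\ref{Comments on the complex case}). Thus Theorem~\ref{maintheorem} is established \emph{through} Theorem~\ref{cobinvref}, not before it; appealing to it here assumes what you are asked to prove.

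The paper's own proof is genuinely different from your cobordism-restriction sketch. One attaches a half-cylinder $M_0\times[0,\infty)$ to the boundary, forms the perturbed operator $\bD_{t,a}=\tilde D'_t - c((p(x)-a)e_R)$ (a Witten-type deformation using the extra Clifford generators), shows $\ind(\bD_{t,a})=0$ via an explicit odd symmetry, and compares $\bD_{t,a}$ with a model operator $\bD^{\rm mod}_t$ on $M_0\times\R$ whose index is computed to be $2\,\ind(M_0,V_0)$. The comparison uses the IMS localization formula and an eigenvalue-counting argument (Proposition~\ref{keyprop}). This analytic machinery is precisely what the extra $Cl_{1,2}$-symmetry in the reformulation is designed to enable, and it is the substance you are missing.

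Setting circularity aside, the geometric construction you outline---restricting to $f^{-1}([r_1,r_2])$ and using $G$-invariance of $f|_V$ to see that leaves lie in level sets---is correct and is essentially how one passes from the level-set formulation back to a cobordism; but it supplies only the easy direction of the equivalence, not the proof.
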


Given a $Cl(TM\oplus \R^{p,q})$-module bundle $W$ as in Theorem~\ref{maintheorem}, we put $\hat W:=W\otimes R_{\C}$, then we have  
${\rm Hom}_{Cl_{p,q}}(R_{\C}, \hat W)\cong W$, and hence, 
Theorem~\ref{maintheorem} is 
equivalent to the above Theorem~\ref{cobinvref}.

%\begin{color}{red}
%$\hat D_{\alpha}$'Æ—]•ª'ÈCliffordì—p'̐®‡«? 
%\end{color}

%%%%%%%%%%%%%%%%%%%%%%%%%%%%%%%%%%%%%%%%%%%%%%%%%%%%%%%%%%%%%%%
\subsection{Real Clifford module bundle case}
\label{Real Clifford module bundle case}
Let $\hat W$ be a {\it real} $Cl(TM\oplus\R^{p,q}\oplus\R e_0\oplus \R e_1\oplus\R\epsilon)$-module bundle over $M$, where the metric is defined by 
$e_0\cdot e_0=e_1\cdot e_1=\epsilon\cdot\epsilon=-1$. 
We regarded $\hat W$ as a $\Z/2$-graded $Cl(TM\oplus\R^{p,q}\oplus\R e_0\oplus\R e_1)$-module by using the Clifford multiplication $c(\epsilon)$. 
%Then $W={\rm Hom}_{Cl_{p,q}}(R,\hat W)\to M$ has a structure of $CL(TM)$-module bundle with the Clifford multiplication $c$, and 
%$W_0=W|_{M_0}=W_0^+\oplus W_0^-$ has a structure of $\Z/2$-graded $Cl(TM)$-module bundle whose grading is given by the eigenspace decomposition of $c(v_0)$. 
%
Suppose that there exists a $G$-tangential acyclic compatible systems 
$\{V_{\gamma}, \SF_{\gamma}, \hat W|_{V}, \hat D_{\gamma}\}_{\gamma\in \hat A}$ on $V$. 
Let $r$ be a regular value of $f$. 
When we put $M_r:=f^{-1}(r)$, we have the 
$\Z/2$-graded $Cl(TM_r\oplus \R v_r\oplus \R^{p,q}\oplus \R e_0\oplus \R e_1)$-module bundle $\hat W_r:=\hat W|_{M_r}$ as in Subsection~\ref{Complex Clifford module bundle case}. 
%, where $v_r$ is the unit normal tangent vector to $TM_r$ with the increasing direction of $f$. 
%]
Let $(R,c_{1,2})$ be a realification of the complex representation $R_{\C}$ 
of $Cl(\R v_r\oplus \R e_0\oplus \R e_1)=Cl_{1,2}$ as in 
Subsection~\ref{Complex Clifford module bundle case}, 
which is an irreducible real representation of $Cl_{1,2}$. 
% and equipped with a complex structure $J$ which commutes with $Cl_{1,2}$-action. 
Namely we put $R:=\R\oplus \R$ and define 
$c_{1,2}:Cl_{1,2}\to {\rm End}(R)$ by 
\begin{equation}\label{realirrrep}
c_{1,2}(v_r)=\begin{pmatrix} 
 0 & -1 \\
1 & 0 
\end{pmatrix}, \ 
c_{1,2}(e_0)=\begin{pmatrix} 
0 & 1 \\
1 & 0 
\end{pmatrix}, \ 
c_{1,2}(e_1)=\begin{pmatrix} 
1 & 0 \\
0 & -1 
\end{pmatrix}.
\end{equation}
We define a $\Z/2$-graded $Cl(TM_r\oplus \R^{p,q})$-module bundle $W_r$ by 
$W_r:={\rm Hom}_{Cl_{1,2}}(R,\hat W_r)$. 
Note that we have $W_r\otimes R\cong\hat W_r$ 
as $Cl(TM_r\oplus\R^{p,q})\otimes Cl_{1,2}\cong 
Cl(TM_r\oplus\R v_r\oplus\R^{p,q}\oplus\R e_0\oplus \R e_1)$-module bundles. 
%Note  that there exists an induced 
%$G$-tangential compatible systems 
%$\{V_{\gamma}, \SF_{\gamma}, W|_{V}, D_{\gamma}\}_{\gamma\in \hat A}$. 
%
When we put $V_r:=M_r\cap V$ we have the induced $G$-tangential acyclic compatible system 
$\{V_{r,\alpha}, \SF_{r,\alpha}, W_r, D_{r,\alpha}\}_{\alpha\in A_r}$ 
on $V_r$. 
Since $M_r\setminus V_r$ is compact 
we have the local index $\ind(M_r,V_r,W_r)$ as an element of 
the real $K$-group with $Cl_{p,q}$-action $KO(pt, Cl_{p,q})$, 
where for any locally compact Hausdorff space $X$, 
the group $KO(X,Cl_{p,q})$ is defined as the Grothendieck group 
of the semi-group consisting of real vector bundles over $X$ 
equipped with a fiberwise action of $Cl_{p,q}$. 
%Then we have the cobordism invariance of the local index as follows. 

Under the above set-up we have the following cobordism invariance for real Clifford module bundles, which will be shown in the next section.  
\begin{theorem}\label{cobinvreal}
The local index $\ind(M_{r}, V_{r},W_{r})$ does not depend on the 
choice of the regular value $r$. 
\end{theorem}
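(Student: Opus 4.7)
The plan is to adapt the Callias-type Witten-deformation argument of Braverman \cite{Braverman,Bravermancobinv} to the present real compatible-system setting. Given two regular values $r_1<r_2$ of $f$, set $N:=f^{-1}([r_1,r_2])$. Since both $r_i$ are regular, a collar neighborhood of each boundary component $M_{r_i}$ inside $N$ is isometric, via the normalized gradient flow of $f$, to a product $M_{r_i}\times I_i$; on this collar the bundle $\hat W$ with its full Clifford action, the foliations $\SF_{\gamma}$, and the Dirac-type operators $\hat D_{\gamma}$ all take product form. Thus $N$ plays the role of a concrete geometric cobordism between the induced data over $M_{r_1}$ and $M_{r_2}$, and the only remaining non-compactness is along the $M_r$-directions, which is already controlled by the $G$-tangential acyclic compatible system on $V$.

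On $\hat W|_N$ I would first take a formally self-adjoint Dirac-type operator $\hat D$ whose symbol is Clifford multiplication on $TN$, of product form on each collar, and anticommuting with the action of $Cl(\R^{p,q}\oplus\R e_0\oplus\R e_1\oplus\R\epsilon)$ (by averaging by the associated finite group as in Subsection~\ref{ind(M,V,W)}). I would then form the leafwise-perturbed operator
\[
\hat D^{(t)}:=\hat D+t\sum_{\gamma\in\hat A}\rho_{\gamma}\hat D_{\gamma}\rho_{\gamma}, \qquad t\gg 1,
\]
with $G$-invariant cut-offs $\{\rho_{\gamma}\}$, and finally add a Callias-type potential
\[
\mathcal D:=\hat D^{(t)}+\phi\,c(e_0),
\]
where $\phi\colon N\to\R$ is smooth, $G$-invariant, and, possibly after attaching cylindrical ends to $\partial N$, arranged so that $|\phi|$ dominates $|d\phi|$ outside a compact set and $\mathcal D^{2}$ is uniformly positive there. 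The result is a Fredholm operator whose $Cl_{p,q}$-equivariant index $\ind(\mathcal D)$ lives in $KO(\mathrm{pt},Cl_{p,q})$.

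The theorem would then follow from two independent computations of $\ind(\mathcal D)$. On one hand, an excision and cylinder-separation argument at the two collars of $\partial N$, using Lemma~\ref{induced comp sys} and the Morita-type identification $W_r=\operatorname{Hom}_{Cl_{1,2}}(R,\hat W_r)$ from Subsection~\ref{Real Clifford module bundle case}, identifies the index as the signed difference
\[
\ind(\mathcal D)=\ind(M_{r_2},V_{r_2},W_{r_2})-\ind(M_{r_1},V_{r_1},W_{r_1}).
\]
On the other hand, $\mathcal D$ carries an extra odd symmetry coming from the Clifford action of $c(e_1)$: this endomorphism squares to the identity, is odd in the $c(\epsilon)$-grading, and anticommutes with $\hat D$ (because $e_1$ is orthogonal to $TN$ in the ambient Clifford algebra), with each $\hat D_{\gamma}$ (by the compatible-system condition applied to the enlarged indefinite factor), and with $\phi\,c(e_0)$ (because $e_0e_1+e_1e_0=0$). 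This extra grading-reversing involution on $\ker(\mathcal D)$, compatible with the enlarged Clifford-equivariance, forces $\ind(\mathcal D)=0$ in $KO(\mathrm{pt},Cl_{p,q})$, and combining the two computations yields the claim.

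The main obstacle is the analytic estimate producing the Fredholm property of $\mathcal D$: one must jointly calibrate the Witten parameter $t$, the cut-offs $\{\rho_{\gamma}\}$, and the growth and support of $\phi$ so that the leafwise and Callias-type potentials dominate the Weitzenb\"ock cross term $c(\operatorname{grad}\phi)\,c(e_0)$ and the first-order interaction $\{\hat D^{(t)},\phi\,c(e_0)\}$ on the overlap of $\supp(\phi)$ with $V$. A secondary delicate point is the excision-type identification of $\ind(\mathcal D)$ with the signed difference of boundary local indices in the real $KO$-theoretic and $Cl_{p,q}$-equivariant setting: this is where the complex argument of \cite{Braverman,Bravermancobinv} has to be reworked using the real irreducible representation $R$ of $Cl_{1,2}$ fixed in Subsection~\ref{Real Clifford module bundle case} rather than the complex $R_{\C}$, and it is there that the modification of the proof is concentrated.
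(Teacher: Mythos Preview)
Your proposal follows the same Braverman-style Witten/Callias framework as the paper, but the implementation differs in several places. The paper first reduces by additivity to a single boundary component $M_0=\partial M$, attaches a half-infinite cylinder to form $\tilde M$, and works with $\bD_{t,a}=(\tilde D+t\sum_\gamma D'_\gamma)\,c(e_0)-(p(x)-a)\,c(e_1)$: here $c(e_0)$ enters as a \emph{multiplicative twist} on the Dirac part while the Callias potential lives in the $e_1$-direction, opposite to your assignment $\mathcal D=\hat D^{(t)}+\phi\,c(e_0)$. The paper's vanishing step (Proposition~\ref{indDta=0}) is taken over from Braverman's Proposition~10.7 via the parameter $a$, rather than from an extra Clifford symmetry as you propose with $c(e_1)$; your symmetry argument is more direct, though in the $KO(\mathrm{pt},Cl_{p,q})$ setting it needs the extra remark that $c(e_1)$ anticommutes with the $Cl_{p,q}$-generators, so vanishing should be phrased as the graded kernel extending to a $Cl_{p,q+1}$-module rather than as a $Cl_{p,q}$-linear isomorphism $\ker^+\cong\ker^-$.

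The place where your sketch is thinnest is exactly where the paper works hardest: identifying the deformed index with the boundary local index. The paper does this concretely by introducing a model operator $\bD^{\rm mod}_t$ on the full cylinder $M_0\times\R$, computing its kernel as a tensor product with a two-dimensional real harmonic-oscillator kernel (Proposition~\ref{kermod=kerDt}, yielding $\ind(\bD^{\rm mod}_t)=2\,\ind(M_0,V_0)$), and then comparing the low eigenvalues of $\bD_{t,a}^2$ with those of $(\bD^{\rm mod}_t)^2$ via IMS localization with the cut-offs $J_a,\bar J_a$ (Proposition~\ref{keyprop}). Your ``excision and cylinder-separation argument'' must in the end perform an equivalent spectral comparison; in particular you will have to specify $\phi$ with opposite signs on the two cylindrical ends (otherwise the boundary contributions cannot appear with opposite signs) and carry the Morita identification $W_r\otimes R\cong\hat W_r$ through that comparison. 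This is precisely the ``secondary delicate point'' you flag but do not resolve, and it is not a routine excision: it is the analytic heart of the argument.
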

%
%Since the proof is same as that of Theorem~\ref{cobinvref}, we omit it. 
%The points are the following three. 
%\begin{enumerate}
%\item $R_{\R}$ is a realification of a irreducible complex representation 
%$(R, J_R)$ of $Cl_{1,2}$. 
%{\color{red} $R_{\R}=\C\oplus \C$?}
%\item We can consider the perturbation $\bD_{t,a}$ as in Subsection~\ref{The perturbation of Dirac-type operator} which commutes with $J_R$. 
%\end{enumerate}

%\medskip

%%%%%%%%%%%%%%%%%%%%%%%%%%%%%%%%%%%%%%%%%%%%%%%%%%%%%%%%%%%%%%%%%%%%%%%%
\section{Proof of Theorem~\ref{cobinvreal}}
\label{Proof of Theorem{realcobinv}}
We prove Theorem~\ref{cobinvreal} by using the technique 
developed in \cite{Braverman,Bravermancobinv}, which is a version of 
Witten-type deformation. 
We may assume that $r_0=0$ for simplicity. 
By the additivity of the index, it suffices to consider the case 
$M_{r_0}=M_0=\partial M$. 
For later convenience let us recall the set-up and fix notations. 

\medskip

\noindent
{\bf Set-up.}  Let $M_0$ and $M$ be Riemannian manifolds 
such that $\partial M=M_0$. 
Let $\hat W$ be a real $Cl(TM\oplus \R^{p,q}\oplus\R e_0\oplus\R e_1\oplus\R\epsilon)$-module bundle over $M$. 
We regard $\hat W$ as a $\Z/2$-graded $Cl(TM\oplus\R^{p,q}\oplus\R e_0\oplus\R e_1)$-module by using the Clifford multiplication of $c(\epsilon)$. 
%Then $W={\rm Hom}_{Cl_{p,q}}(R,\hat W)\to M$ has a structure of $CL(TM)$-module bundle with the Clifford multiplication $c$, and 
%$W_0=W|_{M_0}=W_0^+\oplus W_0^-$ has a structure of $\Z/2$-graded $Cl(TM)$-module bundle whose grading is given by the eigenspace decomposition of $c(v_0)$. 
%Our assumption is the following. 
%\begin{assump}
Suppose that there exists an open subset $V$ of $M$ which has compact complement  
and is equipped with an acyclic $G$-tangential compatible systems 
$\{V_{\gamma}, \SF_{\gamma}, \hat W|_{V}, \hat D_{\gamma}\}_{\gamma\in \hat A}$. 
%\end{assump}
When we put $V_0:=M_0\cap V$ 
we have the induced $G$-tangential acyclic compatible system 
$\{V_{0,\alpha}, \SF_{0,\alpha}, W_0, D_{0,\alpha}\}_{\alpha\in A}$ 
over $V_0$, 
where $W_0=W|_{M_0}=W_0^+\oplus W_0^-$ is  the $\Z/2$-graded $Cl(TM_0)$-module. 
% whose grading is given by 
%the eigenspace decomposition of 
%the Clifford action $c(\epsilon)$. 

\medskip

Though we do not assume that $M$ and $M_0$ are complete, 
we can deform them so that they are complete. 
For instance we can deform them into complete manifolds with cylindrical ends.  
Moreover we can extend the given data, e.g., compatible system, 
to the data on the completion so that they have translational invariance on the end. 
Hereafter we may assume that $M$ and $M_0$ are complete 
and the given data have uniformity on the end by using such completion. 
By the excision property, the local indices do not change 
under the completion. 

\subsection{The perturbation of Dirac-type operator}
\label{The perturbation of Dirac-type operator} 
%\subsection{The Clifford module bundle structure}
Let $\tilde M$ be the manifold obtained from $M$ by attaching the 
cylinder $M_0\times [0,\infty)$, i.e., 
$\tilde M=M\cup M_0\times[0,\infty)$. 
Then we can extend 
$\{V, V_{\gamma}, \SF_{\gamma}, 
\hat W|_{ V}, \hat D_{\gamma}\}_{\gamma\in \hat A}$ naturally to 
$\tilde M$, and we denote them by 
$\{\tilde V,\tilde V_{\gamma}, \tilde\SF_{\gamma}, 
\tilde W|_{\tilde V}, \tilde D_{\gamma}\}_{\gamma\in \hat A}$. 
%We put $\hat W'=\hat W\otimes \wedge^{\bullet} \C$ and 
%define $\hat c':T^*\hat M\to {\rm End}(\hat W')$ by 
%$$
%\hat c'(v)=\sqrt{-1}c(v)\otimes c_L(1) \quad (v \in T^*\hat M=T\hat M),  
%$$where $c_L:\R\to {\rm End}(\wedge^{\bullet}\C)$ is defined by 
%using the 
%contraction $\iota_{r}$ on $\wedge^{\bullet}\C$  for $r\in \R$ 
%$$c_L(r)(\omega)=r\wedge\omega-\iota_r(\omega) \quad (\omega\in \wedge^{\bullet}\C).$$ 
%The map $\hat c'$ gives a $\Z/2$-graded $Cl(T\hat M)$-module bundle structure on $\hat W'$. 
%\begin{lemma}
%Put $\hat{(W')}^{+}=\hat W\otimes\wedge^0 \C$ and 
%$\hat{(W')}^{-}=\hat W\otimes\wedge^1 \C$. 
%The map $\hat c'$ gives a structure of $\Z/2$-graded 
%$Cl(T\hat M')$-module bundle. 
%\end{lemma}
%
We put $W_0={\rm Hom}_{Cl_{1,2}}(R,\tilde W|_{M_0})$ for an irreducible real representation $R$ of $Cl_{1,2}$.   
We take and fix a $Cl_{p,q}$ anti-invariant Dirac-type operator $D:\Gamma(W_0)\to \Gamma(W_0)$. 
% , i.e., $D$ is a degree one self adjoint operator of order one whose principal symbol is given by the Clifford action of $Cl(TM_0)(\subset Cl(TM_0\oplus \R^{p,q}))$ and anti-commutes with $Cl_{p,q}$-action. 
%
Let $\tilde D:\Gamma(\tilde W)\to\Gamma(\tilde W)$ be a Dirac-type operator such that 
\begin{equation}\label{initialDirac}
\tilde D|_{M_0\times(0,\infty)}=\pi^*D\otimes 1+1\otimes c(v_0)\partial_r
\end{equation}
under the isomorphism $\tilde W|_{M_0\times (0,\infty)}\cong 
\pi^*\tilde W|_{M_0}\cong\pi^*(W_0\otimes R) $, 
where 
$\pi$ is the natural projection to the first factor of $M\times (0,\infty)$, 
$\partial_r$ is the covariant derivative of $(0,\infty)$-direction with respect to 
a Clifford connection on $\pi^*\tilde W|_{M_0}$. 
%We also define $\tilde D':\Gamma(\tilde W)\to \Gamma(\tilde W)$ by 
%$\tilde D':=\tilde D\otimes c_{p+1,q+2}(e_L)$ using the 
%isomorphism $W\otimes R=\hat W$. 
We take a family of cut off functions $\{\rho_{\gamma}^2\}$ for $\tilde M=(\tilde M\setminus \tilde V)\cup \left(\cup_{\gamma}\tilde V_{\gamma}\right)$ 
as in the end of Subsection~\ref{ind(M,V,W)}. 
%Here we consider the trivial foliation on a neighborhood of $\hat M\setminus \hat V$. 
Let $D_{\gamma}$ be the Dirac-type operator along leaves which is induced by 
$\tilde D_{\gamma}$ and we put $D_{\gamma}':=\rho_{\gamma}D_{\gamma}\rho_{\gamma}$. 
For $t>0$ we put 
%$\tilde D_{\gamma}'=\rho_{\gamma}\tilde D_{\gamma}\rho_{\gamma}$ and 
\begin{equation}\label{perturbation1}
\tilde D'_t=
\left(\tilde D+t\sum_{\gamma} D_{\gamma}'\right)c(e_0).  
\end{equation}
%where we use the isomorphism $\tilde W\cong W\otimes R$. 
We take and fix a smooth function 
$s:\R\to [0,\infty)$ and $p:\tilde M\to\R$ such that 
%$s(r)=\pm 4$ for $|r|\geq 3$, 
$s(r)=r$ for $|r|\geq 1$, $s(r)=0$ for $|r|\leq 1/2$, 
$p(x)=0$ for $x\in M\subset \tilde M$ and $p(y,r)=s(r)$ for 
$(y,r)\in M_0\times (0,\infty)$. 
For $a\in \R$ we put 
\begin{equation}\label{perturbation2}
{\mathbf D}_{t,a}=\tilde D'_t-c((p(x)-a)e_1). 
\end{equation}
%where $c_R:\R\to {\rm End}(\wedge^{\bullet}\C)$ is defined by 
%$c_R(r)(\omega)=r\wedge\omega+\iota_r(\omega)$ for $r\in \R$. 
%Note that $c_R(r)^2=r^2$ and $c_R(r)c_L(r)+c_R(r)c_L(r)=0$ for any $r\in\R$. 

\begin{prop}\label{laplacian}
For any $t>0$ and $a\in\R$ we have 
$$
\bD_{t,a}^2=(\tilde D'_t)^2+B+|p(x)-a|^2, 
$$where $B$ is a uniformly bounded bundle map of $\tilde W$ of degree 0.  
%which vanishes on $M_0\times [4,\infty)\subset \tilde M$.  
%which satisfies $B|_{\tilde M}=0$ and 
%$B|_{M\times (0,\infty)}=\sqrt{-1}c_r(\Pi_1-\Pi_0)$ for the 
%projections $\Pi_i : \hat W'|_{M\times (0,\infty)}\to 
%\hat W|_{M\times (0,\infty)}\otimes \wedge^i\C$ ($i=0,1$). 
\end{prop}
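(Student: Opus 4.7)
The plan is a direct computation. Writing $f := p(x)-a$ and expanding the square,
\[
\bD_{t,a}^2 \;=\; (\tilde D'_t)^2 \;-\; \bigl\{\tilde D'_t,\; c(f\,e_1)\bigr\} \;+\; c(f\,e_1)^2.
\]
The potential term is immediate: $e_1\cdot e_1 = -1$ gives $c(e_1)^2 = 1$, so $c(f\,e_1)^2 = f^2 = |p(x)-a|^2$. The content of the proposition is thus reduced to showing that the anti-commutator $\{\tilde D'_t,\,c(f e_1)\}$ is a uniformly bounded bundle endomorphism of $\tilde W$ of degree zero; one then sets $B := -\{\tilde D'_t,\,c(f e_1)\}$.

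For the anti-commutator I would factor $\tilde D'_t = A\,c(e_0)$ with $A := \tilde D + t\sum_\gamma D_\gamma'$ and use three structural inputs. First, $c(e_0)$ anti-commutes with $c(e_1)$, since $e_0\perp e_1$ in $\R e_0\oplus \R e_1$. Second, the operator $A$ anti-commutes with both $c(e_0)$ and $c(e_1)$ as full differential operators: the principal symbols of $\tilde D$ and of each $D_\gamma$ are Clifford multiplication by covectors in $T^*\tilde M$, orthogonal to $e_0, e_1$, and since $c(e_0), c(e_1)$ are covariantly constant, the anti-commutation extends from the principal symbol to the whole operator. Third, for any scalar function $g$ one has $[A,g]=c(dg)+t\sum_\gamma \rho_\gamma^2\,c_{T^*\SF_\gamma}(dg)$. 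Combining these, the derivative-containing pieces in $\{Ac(e_0),\,f c(e_1)\}$ should collapse and leave only the zeroth-order contribution proportional to $c(df)\,c(e_0)\,c(e_1)$.

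To see that this surviving term is uniformly bounded: by construction, $dp = s'(r)\,dr$ on the cylinder $M_0\times(0,\infty)$ and vanishes identically on $M$, so $|dp|\le\|s'\|_\infty$. Moreover, by the translationally invariant extension of the compatible system to the cylindrical end, each leaf of $\tilde\SF_\gamma$ there is of the form $L_0\times\{r\}$, tangent to $TM_0$; hence $dp$ is normal to every leaf, the tangential symbol $c_{T^*\SF_\gamma}(dp)$ vanishes, and the perturbation contributes nothing to $[A,f]$. The main delicate step of the proof will be the sign bookkeeping that produces the cancellation of the first-order cross terms in the anti-commutator $\{Ac(e_0),\,fc(e_1)\}$, together with a short verification on the transition region between $M$ and the pure cylinder that the cutoff data and the extended foliation are compatibly arranged so that $c_{T^*\SF_\gamma}(dp)=0$ continues to hold there. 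Once these are in place the proposition follows, with $B$ given by $-c(df)\,c(e_0)\,c(e_1)$ up to a fixed parallel endomorphism.
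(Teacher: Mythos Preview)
Your overall plan—expand the square and reduce to controlling the anti-commutator $\{\tilde D'_t,\,fc(e_1)\}$—matches the paper. But the ``sign bookkeeping'' you flag as delicate in fact fails under the hypothesis you state, and the correct mechanism is different. Suppose, as you claim, that $A=\tilde D+t\sum_\gamma D_\gamma'$ anti-commutes with both $c(e_0)$ and $c(e_1)$. Then $c(e_1)\,(Ac(e_0))=-A\,c(e_1)c(e_0)=A\,c(e_0)c(e_1)$, so $\tilde D'_t=Ac(e_0)$ \emph{commutes} with $c(e_1)$, and one computes
\[
\{Ac(e_0),\,fc(e_1)\}\;=\;[A,f]\,c(e_0)c(e_1)\;+\;2f\,Ac(e_0)c(e_1).
\]
The second summand is a genuine first-order operator with unbounded coefficient $f=p(x)-a$; nothing collapses. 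The cancellation you want occurs precisely when $Ac(e_0)$ \emph{anti}-commutes with $c(e_1)$, which requires $A$ to \emph{commute} with $c(e_0),c(e_1)$—the opposite parity.

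The paper supplies this commutation via the tensor structure on the cylinder, not via a principal-symbol/covariant-constancy argument. Under $\tilde W|_{M_0\times(0,\infty)}\cong\pi^*(W_0\otimes R)$, the actions of $c(e_0),c(e_1),c(v_0)$ sit in the $R$-factor, while by construction (equation~(\ref{initialDirac})) $\pi^*D$ and $t\sum_\gamma D_\gamma'$ act through the $W_0$-factor and hence \emph{commute} with $c(e_0),c(e_1)$. Since they also contain no $\partial_r$, they commute with $f=s(r)-a$, and one gets $\{(\pi^*D+t\sum_\gamma D_\gamma')c(e_0),\,fc(e_1)\}=0$; this is exactly the ``point'' the paper isolates. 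Your argument that anti-commutation of $A$ with $c(e_0),c(e_1)$ extends from the symbol to the full operator would hold for a Dirac operator built from a Clifford connection, but the operator in~(\ref{initialDirac}) is deliberately not of that form: $\pi^*D\otimes 1$ commutes with $Cl_{1,2}$ by design. So both the structural hypothesis you invoke and the parity conclusion you draw from it need to be replaced by this tensor-product reasoning.
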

\begin{proof}
The proof is almost same as that for \cite[Lemma~10.4]{Braverman}.  
%by putting $B:=\sqrt{-1}s'c_r\otimes 1(\Pi_1-\Pi_0)$. 
The point is $\left(\pi^*D+t\sum D_{\gamma}'\right)c(e_0)$ anti-commutes with $c((s(r)-a)e_1)$ on $\Gamma(\tilde W|_{M_0\times (0,\infty)})$. 
\end{proof}

\begin{prop}\label{laplacianestimate}
For any $\lambda>0$ 
there exists  $T>0$ such that for  each $a\in\R$ there exists an open subset 
$V(a)$ of $\tilde M$ whose complement is compact and for each compactly supported section $\phi\in \Gamma(\tilde W)$ with ${\rm supp}(\phi)\subset V(a)$ we have 
$$
\|\bD_{t,a}\phi\|_{L^2(\tilde W)}\geq \lambda\|\phi\|_{L^2(\tilde W)}. 
$$
\end{prop}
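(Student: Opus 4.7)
The plan is to combine two lower bounds for $\bD_{t,a}^2$ furnished by Proposition~\ref{laplacian}: a potential bound coming from $|p(x)-a|^2$ (dominating where the potential is large) and a Dirac bound coming from $(\tilde D'_t)^2$ (dominating, for large $t$, on sections supported in $\tilde V$ away from a fixed compact set). Let $C:=\sup_{\tilde M}|B|$, finite by Proposition~\ref{laplacian}. Pairing the identity of that proposition with $\phi\in\Gamma_c(\tilde W)$ gives
\begin{equation*}
\|\bD_{t,a}\phi\|_{L^2}^2 = \langle(\tilde D'_t)^2\phi,\phi\rangle + \langle B\phi,\phi\rangle + \int_{\tilde M}|p(x)-a|^2\,|\phi(x)|^2\,dx.
\end{equation*}
Setting $U(a):=\{x\in\tilde M:|p(x)-a|^2 > C+\lambda^2\}$, whenever $\supp(\phi)\subset U(a)$ the potential term absorbs the $-C$ from $B$ and we obtain $\|\bD_{t,a}\phi\|^2\geq\lambda^2\|\phi\|^2$ uniformly in $t$ (since $\langle(\tilde D'_t)^2\phi,\phi\rangle\geq 0$ in the self-adjoint formulation of \cite{Braverman, Bravermancobinv}).

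The second, complementary, mechanism is an acyclic spectral-gap estimate for the leafwise deformation, essentially the content of the local-index construction in \cite[Section~7]{Fujita-Furuta-Yoshida2}: there exist a compact set $K_1\subset\tilde V$, a constant $c>0$ and $T_0>0$, all independent of $a$, such that for every $t\geq T_0$ and every $\psi\in\Gamma_c(\tilde W)$ supported in $\tilde V\setminus K_1$,
\begin{equation*}
\langle(\tilde D'_t)^2\psi,\psi\rangle\geq ct\,\|\psi\|^2.
\end{equation*}
The key ingredients are: acyclicity (Definition~\ref{strongly acyclic}) makes $\sum_\gamma\rho_\gamma D_\gamma\rho_\gamma$ fiberwise invertible with a uniform spectral gap off $K_1$; the non-negative anti-commutator condition $\{D_\alpha,D_\beta\}\geq 0$ controls the cross terms in the square; the cross term with $\tilde D$ contributes only a bounded zeroth-order error via the Bochner expansion; and translation invariance of the extended data on the cylindrical end propagates the estimate uniformly out to infinity. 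Taking $T:=\max\{T_0,(C+\lambda^2)/c\}$, for $\psi$ supported in $\tilde V\setminus K_1$ and $t\geq T$ we get $\|\bD_{t,a}\psi\|^2\geq(cT-C)\|\psi\|^2\geq\lambda^2\|\psi\|^2$, independent of $a$.

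I then set $V(a):=U(a)\cup(\tilde V\setminus K_1)$. Its complement lies in $K_1\cup[(\tilde M\setminus\tilde V)\cap\{|p-a|^2\leq C+\lambda^2\}]$; the second set is compact because $M\setminus V$ is compact by the standing hypothesis and, on the cylindrical end $(M_0\setminus V_0)\times[0,\infty)$, the bound $|p-a|^2\leq C+\lambda^2$ confines $r$ to a bounded interval while $M_0\setminus V_0$ is compact. For an arbitrary $\phi\in\Gamma_c(\tilde W)$ with $\supp(\phi)\subset V(a)$, I choose a partition of unity $\chi_U^2+\chi_V^2=1$ on $V(a)$ subordinate to the cover $\{U(a),\tilde V\setminus K_1\}$, with $\chi_V$ built from the leafwise-constant cutoffs $\rho_\gamma$ and $\chi_U$ taken to depend only on $p(x)-a$, so that $\sup_a\|d\chi_\bullet\|_\infty<\infty$. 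The two estimates apply to $\chi_U\phi$ and $\chi_V\phi$ and combine via the IMS-type identity
\begin{equation*}
\|\bD_{t,a}\phi\|^2 = \sum_{i\in\{U,V\}}\|\bD_{t,a}(\chi_i\phi)\|^2 - \sum_{i\in\{U,V\}}\|[\bD_{t,a},\chi_i]\phi\|^2,
\end{equation*}
the uniformly bounded commutator error being absorbed by enlarging $T$ slightly.

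The main obstacle is the acyclic spectral-gap estimate of the second paragraph, together with ensuring that the commutators $[\tilde D'_t,\chi_\bullet]$ remain uniformly bounded in $t$ (not linearly growing). The latter is the technical reason to take $\chi_V$ leafwise constant, so that $[tD'_\gamma,\chi_V]=0$ identically and only a bounded commutator with $\tilde D$ survives; this is precisely the place where the $G$-tangential structure, via the $G$-invariant leafwise-constant cutoffs $\rho_\gamma$ of Subsection~\ref{ind(M,V,W)}, is genuinely used. The gap estimate itself must be checked to carry over to the real Clifford, $G$-tangential, translation-invariant cylindrical setting considered here, but since $K_1,c,T_0$ depend only on the data over $\tilde V$, the uniformity in $a$ then follows automatically.
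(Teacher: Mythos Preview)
Your overall strategy matches the paper's: use Proposition~\ref{laplacian} to write $\bD_{t,a}^2=(\tilde D'_t)^2+B+|p-a|^2$, then dominate on a ``potential region'' (where $|p-a|^2>\|B\|_\infty+\lambda$) and on an ``acyclic region'' inside $\tilde V$ (where the leafwise estimate of \cite[Proposition~4.4]{Fujita-Furuta-Yoshida2} gives $\|\tilde D'_t\phi\|^2\geq(t^2-Ct)\|\phi\|^2$, the quadratic bound the paper quotes rather than your linear $ct$). The difference is purely in the gluing. You take an overlapping cover $V(a)=U(a)\cup(\tilde V\setminus K_1)$ and invoke IMS localization, which then obliges you to construct a partition of unity with controlled commutators, uniformly in $a$. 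The paper sidesteps this entirely by choosing the two regions to be \emph{disjoint}: $V(a)$ is the part of $\tilde V$ outside a fixed precompact neighborhood $U\supset M\setminus V$, together with the piece of the cylinder lying over $M_0\cap U$ at height $r\geq b(a)$, with $b(a)$ chosen so that $|p-a|^2-\|B\|_\infty\geq\lambda$ there. Since both $(\tilde D'_t)^2$ and $|p-a|^2$ are nonnegative, on each piece one simply drops the unneeded term in the formula of Proposition~\ref{laplacian} and adds the two integrals directly---no cutoffs, no commutator estimates, and the only $a$-dependence is the single scalar $b(a)$.

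One small inconsistency in your IMS version: you ask that $\chi_V$ be ``built from the leafwise-constant cutoffs $\rho_\gamma$'' while $\chi_U$ depends only on $p(x)-a$; with $\chi_U^2+\chi_V^2=1$ these two requirements are incompatible unless $\chi_V$ is itself a function of $p$. In fact that is all you need: $p\equiv 0$ on $M$ and $p=s(r)$ on the attached cylinder, and the extended leaves there lie in the slices $M_0\times\{r\}$, so any function of $p$ is automatically leafwise constant and $[tD'_\gamma,\chi_i]=0$ holds for free. With that correction your IMS route goes through, but the paper's disjoint-region trick makes the whole issue evaporate.
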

\begin{proof}
We take a pre-compact open neighborhood $U$ of $M\setminus V$ in $M$.  
Let $K$ be the complement $M\setminus U$ and put $K_0:=K\cap M_0$. 
For each $a\in\R$ we take $b>0$ large enough so that we have 
$$
-\|B\|_{\infty}+|p(x)-a|^2\geq \lambda
$$
on $K(a):=(M_0 \setminus U)\times [b,\infty)$.  
We define an open subset $V(a)$ of $\tilde M$ by the interior part of the union $K_0\times[0,\infty) \cup K(a)$. 
By definition the complement $\tilde M\setminus V(a)=(M\setminus V)\cup (M_0\setminus U)\times [0,b]$ is compact. 
As in the proof of \cite[Proposition~{4.4}]{Fujita-Furuta-Yoshida2}, 
there exists a constant $C>0$ such that 
$$
\|\hat D_t'\phi\|^2_{L^2(\tilde W|_{V})}\geq (t^2-Ct)\|\phi\|^2_{L^2(\tilde W|_V)}, 
$$
for any compactly supported section $\phi\in\Gamma(\tilde W)$ with 
${\rm supp}(\phi)\subset V$, and hence we have 
$$
\|\hat D_t'\phi\|^2_{L^2(\tilde W|_{K})}\geq (t^2-Ct)\|\phi\|^2_{L^2(\tilde W|_K)}, 
$$
for any compactly supported section $\phi\in\Gamma(\tilde W)$ with 
${\rm supp}(\phi)\subset K$. 
%$K$'©'çcut-off ŠÖ"'Å$V$ã'Ìcpt'äØ'f'Æ'Ý'È'¹'΂悢. 
Let $\phi\in \Gamma(\tilde W)$ be a compactly supported section with ${\rm supp}(\phi)\subset V(a)$. 
We take $T>0$ large enough so that we have 
$t^2-Ct-\|B\|_{\infty}\geq \lambda$ for any $t>T$. 
Together with Proposition~\ref{laplacian} we have  
\begin{eqnarray*}
\|\bD_{t,a}\phi\|^2_{L^2(W|_{V(a)})}&=&
\int_{V_a}(((D_t')^2+B+|p(x)-a|^2)\phi,\phi))\\
&=&\left(\int_{K_0\times [0,\infty)}+\int_{K(a)}\right)(((D_t')^2+B+|p(x)-a|^2)\phi,\phi))
\\
&\geq&
\int_{K_0\times [0,\infty)}(((D_t')^2+B)\phi,\phi))
+\int_{K(a)}(B+|p(x)-a|^2)\phi,\phi))\\
&\geq& (t^2-Ct-\|B\|_{\infty})\|\phi\|^2_{L^2(W|_{K_0\times[0,\infty)})}
 +\lambda\|\phi\|^2_{L^2(W|_{K(a)})}\\
&\geq&\lambda\|\phi\|^2_{L^2(W|_{K_0\times[0,\infty)})}
 +\lambda\|\phi\|^2_{L^2(W|_{K(a)})}
=\lambda\|\phi\|^2_{L^2(W|_{V(a)})}.
\end{eqnarray*}
\end{proof}

\begin{prop}\label{indDta}
For any $a\in\R$ and $T>0$ as in Proposition~\ref{laplacianestimate} 
the space of $L^2$-solutions $\ker_{L^2}(\bD_{t,a})$ is a $\Z/2$-graded 
finite-dimensional vector space 
and its super-dimension $\ind(\bD_{t,a})$ does not depend on $t>T$ and $a$. 
\end{prop}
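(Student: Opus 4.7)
The plan is the standard three-step strategy for operators with a uniform spectral gap on the non-compact end: finite-dimensionality via Proposition~\ref{laplacianestimate} and elliptic regularity, identification of the $\Z/2$-grading via an involution anti-commuting with $\bD_{t,a}$, and a spectral-continuity argument for constancy of the super-dimension.

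First I would establish finite-dimensionality. Fix $t > T$ and $a \in \R$, and let $\phi \in \ker_{L^2}(\bD_{t,a})$. I would choose a smooth cut-off $\chi$ supported in $V(a)$ and equal to $1$ outside a fixed compact neighborhood $K$ of $\tilde M \setminus V(a)$. Since multiplication by $(p-a)c(e_1)$ commutes with $\chi$, the commutator $[\bD_{t,a},\chi]$ reduces to $[\tilde D'_t,\chi]$, a zeroth-order Clifford-multiplication operator supported on $\supp(d\chi)\subset K$. Applying Proposition~\ref{laplacianestimate} to $\chi\phi$ (after a further compact truncation to satisfy its hypothesis) bounds $\|\phi\|_{L^2(V(a))}$ by a constant multiple of $\|\phi\|_{L^2(K)}$. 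Interior elliptic regularity and Rellich's compactness theorem on $K$ then make the unit ball of $\ker_{L^2}(\bD_{t,a})$ relatively compact in $L^2$, yielding finite-dimensionality.

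Next, the $\Z/2$-grading comes from the involution $c(e_0)$: since $e_0\cdot e_0=-1$ we have $c(e_0)^2=\id$, and a direct computation gives $\{c(e_0),\bD_{t,a}\}=0$. Indeed, $\tilde D$ and each $D_\gamma$ anti-commute with $c(e_0)$, so $\tilde D'_t = (\tilde D + t\sum D'_\gamma)c(e_0)$ anti-commutes with $c(e_0)$; and $c(e_0)c(e_1) = -c(e_1)c(e_0)$. Hence $\bD_{t,a}$ is odd with respect to the $\pm 1$ eigenspace decomposition $\tilde W = \tilde W_+ \oplus \tilde W_-$ of $c(e_0)$, and this grading descends to $\ker_{L^2}(\bD_{t,a})$. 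I set $\ind(\bD_{t,a}) := \dim_{\R} \ker_{L^2}(\bD_{t,a})_+ - \dim_{\R} \ker_{L^2}(\bD_{t,a})_-$.

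For the independence in $(t,a)$ the plan is a continuous-family spectral argument. The family $(t,a)\mapsto\bD_{t,a}$ is smooth on $(T,\infty)\times\R$, and the constant $\lambda$ in Proposition~\ref{laplacianestimate} is uniform in $a$, giving a Persson-type bound that separates the essential spectrum of $\bD_{t,a}^*\bD_{t,a}$ from $0$ uniformly on the parameter space. Hence the spectral projection onto low-lying eigenvalues has locally constant finite rank; since $c(e_0)$ commutes with $\bD_{t,a}^*\bD_{t,a}$ and the map $\bD_{t,a}$ pairs up non-zero eigenspaces of $\bD_{t,a}^*\bD_{t,a}$ and $\bD_{t,a}\bD_{t,a}^*$ in a way compatible with $c(e_0)$, the super-dimension of the low-lying subspace equals $\ind(\bD_{t,a})$. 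This integer-valued quantity is locally constant on the connected parameter space $(T,\infty)\times\R$, and therefore constant.

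The hard part will be the third step: rigorously setting up the spectral theory of the non-compact, not-obviously-self-adjoint operator $\bD_{t,a}$, verifying a suitable closed extension, establishing Persson-type bounds uniformly in the parameters, and proving continuity of the relevant spectral projections. The cylindrical completion and translation-invariant end data built into the set-up are designed precisely to furnish this uniformity, following the template of \cite{Braverman, Bravermancobinv}.
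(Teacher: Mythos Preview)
Your overall strategy is sound and in substance parallels what the paper is invoking, but the paper's argument is much terser: it simply cites \cite[Theorem~3.2]{Fujita-Furuta-Yoshida2} for the finite-dimensionality and $\Z/2$-grading of $\ker_{L^2}(\bD_{t,a})$, and then appeals to the homotopy invariance and excision property of the local index (already established in that framework) for constancy in $(t,a)$. Your steps~1 and~3 are essentially an unpacking of those citations---Proposition~\ref{laplacianestimate} supplies exactly the uniform off-compact gap that feeds into the general finite-dimensionality theorem, and the spectral-projection continuity you sketch is the analytic content behind ``homotopy invariance.'' So the two routes are close; yours is more self-contained, the paper's leans on its existing machinery.

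There is, however, a genuine gap in your step~2. You assert that $\tilde D$ anti-commutes with $c(e_0)$, but this is not compatible with the explicit form (\ref{initialDirac}) on the cylinder. Under the identification $\tilde W|_{M_0\times(0,\infty)}\cong\pi^*(W_0\otimes R)$ the element $e_0$ lies in the $Cl_{1,2}$-factor, so $c(e_0)$ corresponds to $1\otimes c_{1,2}(e_0)$; then the summand $\pi^*D\otimes 1$ \emph{commutes} with $1\otimes c_{1,2}(e_0)$, while $1\otimes c(v_0)\partial_r$ anti-commutes with it. Hence $\tilde D$ has no definite parity with respect to $c(e_0)$, and your anti-commutation $\{c(e_0),\bD_{t,a}\}=0$ fails as stated. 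The paper sidesteps this entirely by treating $(\tilde M,\tilde V,\tilde W,\bD_{t,a})$ as an instance of the local-index set-up of \cite{Fujita-Furuta-Yoshida2}, whose Theorem~3.2 already delivers a $\Z/2$-graded finite-dimensional kernel; no explicit grading operator is singled out at this point. If you want to keep a direct argument, you need to revisit carefully which involution on $\tilde W$ actually makes $\bD_{t,a}$ odd (or, alternatively, show that $\bD_{t,a}^2$ is even and work with $\ker_{L^2}(\bD_{t,a}^2)$, as the paper implicitly does via Proposition~\ref{laplacian}).
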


\begin{proof}
By \cite[Theorem~3.2]{Fujita-Furuta-Yoshida2}, 
$\ker_{L^2}(\bD_{t,a})$ is a $\Z/2$-graded 
finite-dimensional vector space for each $t>T$ and $a\in\R$. 
Moreover its super-dimension $\ind(\bD_{t,a})$ does not depend on $t>T$ and 
$a\in \R$ by the homotopy invariance and the excision property. 
\end{proof}

\begin{prop}\label{indDta=0}
Let $T>0$ be the constant as in Proposition~\ref{indDta}. 
For any $t>T$ and $a\in \R$ we have 
$$
\ind(\bD_{t,a})=\dim\ker_{L^2}(\bD_{t,a})^+-\dim\ker_{L^2}(\bD_{t,a})^-=0. 
$$
\end{prop}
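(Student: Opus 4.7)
The plan is to exhibit a grading-reversing involutive symmetry of $\tilde W$ that commutes with $\bD_{t,a}$, thereby identifying $\ker_{L^2}(\bD_{t,a})^+$ with $\ker_{L^2}(\bD_{t,a})^-$ as real vector spaces and forcing the super-dimension to vanish.

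Take $J := c(e_1) \in \End(\tilde W)$. Since $e_1 \cdot e_1 = -1$, the Clifford relation gives $c(e_1)^2 = \mathrm{id}$, so $J$ is an involution (hence invertible on each fiber). Moreover $e_1$ is orthogonal to $TM$, $\R e_0$, and $\R\epsilon$ in the ambient inner-product space, so $c(e_1)$ anti-commutes with $c(\xi)$ for every $\xi \in TM$, with $c(e_0)$, and with the grading involution $c(\epsilon)$.

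The first main step is to verify $[J, \bD_{t,a}] = 0$. The operator $A := \tilde D + t\sum_{\gamma} D'_\gamma$ has principal symbol in $c(TM)$ and is built from a Clifford connection on $\tilde W$; since $c(e_1)$ is parallel with respect to any such connection, $A$ anti-commutes with $c(e_1)$ as a full differential operator, not merely at the symbol level. Combined with $c(e_0)c(e_1) = -c(e_1)c(e_0)$, this yields
\[
c(e_1)\,\tilde D'_t \;=\; c(e_1)\,A\,c(e_0) \;=\; -A\,c(e_1)\,c(e_0) \;=\; A\,c(e_0)\,c(e_1) \;=\; \tilde D'_t\,c(e_1).
\]
Since $c(e_1)$ trivially commutes with the multiplication by $(p(x)-a)c(e_1)$, we conclude $[J, \bD_{t,a}] = 0$.

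Because $c(e_1)$ anti-commutes with $c(\epsilon)$, $J$ swaps $\tilde W^+$ and $\tilde W^-$. Combined with the commutation above, $J$ preserves $\ker_{L^2}(\bD_{t,a})$ and restricts to an $\R$-linear isomorphism $\ker_{L^2}(\bD_{t,a})^+ \xrightarrow{\cong} \ker_{L^2}(\bD_{t,a})^-$, giving
\[
\ind(\bD_{t,a}) \;=\; \dim\ker_{L^2}(\bD_{t,a})^+ - \dim\ker_{L^2}(\bD_{t,a})^- \;=\; 0.
\]
The main (mild) obstacle is checking the anti-commutation of $A$ and $c(e_1)$ at the differential-operator level, since naive symbol considerations only give it modulo lower-order terms; this is standard once one notes that $c(e_1)$ is parallel for the Clifford connection defining $\tilde D$ and each $D'_\gamma$.
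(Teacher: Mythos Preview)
Your strategy---exhibiting a grading-reversing involution $J$ that commutes with $\bD_{t,a}$---is exactly the approach the paper invokes by citing Braverman's Proposition~10.7, and your choice $J=c(e_1)$ is the natural one in this real setting. So the proposal is correct and matches the paper's method.

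One point deserves tightening. You justify $\{A,c(e_1)\}=0$ by asserting that $\tilde D$ and the $D'_\gamma$ are ``built from a Clifford connection'' with $c(e_1)$ parallel. For the $D'_\gamma$ this is unnecessary: condition~(5) of Definition~\ref{compatible system}, applied to the $Cl(TM\oplus\R^{p,q}\oplus\R e_0\oplus\R e_1\oplus\R\epsilon)$-module $\hat W$, already forces each $\hat D_\gamma$ (hence $D'_\gamma$) to anti-commute with $c(e_1)$. For $\tilde D$, however, the paper does \emph{not} assume it arises from a Clifford connection---it is merely a Dirac-type operator with the prescribed form~(\ref{initialDirac}) on the cylinder. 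The clean fix is the averaging procedure the paper already uses in Subsection~\ref{ind(M,V,W)}: replace $\tilde D$ by its average over the finite group $G_{p,q+3}$ associated to the extra Clifford generators. This preserves the principal symbol and (by homotopy invariance of the index) does not change $\ind(\bD_{t,a})$, while guaranteeing the needed anti-commutation. With that adjustment your argument is complete.
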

\begin{proof}
The proof is same as that for \cite[Proposition~10.7]{Braverman}. 
\end{proof}

\subsection{The model operator on the cylinder} 
Consider the natural extension of the $\Z/2$-graded Clifford module bundle 
$\tilde W|_{M_0\times (0,\infty)}$ to $M_0\times \R$. 
We denote it by the same notation $\tilde W$. 
%, and then we have 
%$\hat W'=\pi^*W\otimes \wedge^{\bullet}\C$ for the projection $\pi:M\times \R\to M$. 	   
%For a Dirac type operator $D$ on $W_0\to M_0$ we can define a 
%self-adjoint operator of odd degree, 
%$$
%\tilde D:=\sqrt{-1}(\pi^*D+c(v_r)\partial_r)\otimes c_{p+1,q+2}(e_L) 
%$$on $\tilde W=W_0\otimes R$, where the $\Z/2$-grading is given by the 
%$c_{p+1, q+2}(\epsilon)=\pm 1$. 
We put $W_0:={\rm Hom}_{Cl_{1,2}}(R,\tilde W)$ for the irreducible 
representation $R$ of $Cl_{1,2}$ as in (\ref{realirrrep}). 

Let $D:\Gamma(W_0')\to \Gamma(W_0')$ be a Dirac-type operator of odd degree, 
where the $\Z/2$-grading $W_0'=(W_0')^+\oplus (W_0')^-$ is given by 
$c(\epsilon)=\pm 1$.  
%We have the $G$-tangential acyclic compatible system 
%$\{D_{0,\alpha}\}_{\alpha\in A}$ on $V_0=M_0\cap V$ as follows. 
%Let $A$ be the subset of $\hat A$ defined by 
%$A=\{\alpha\in\hat A \  | \ V_{\alpha}\cap M_0\neq \emptyset\}$. 
%For $\alpha\in A$ we put $V_{0,\alpha}=V_{\alpha}\cap V_0$ and 
%define $D_{0,\alpha}:\Gamma(W|_{V_{0,\alpha}})\to 
%\Gamma(W|_{V_{0,\alpha}})$ by the restriction 
%of $D_{\alpha}$. 
%Following \cite{Braverman} and \cite{Shubin} we define the model operator. 

\begin{definition}
For $t>0$ we define the {\it model operator} $\bD^{\rm mod}_t$ 
which acts on the space of smooth sections of $\tilde W\to M_0\times \R$ by 
\begin{equation}\label{modelop1}
\bD^{\rm mod}_t=\left(r^*D+c(v_r)\partial_r+t\sum_{\alpha} D_{0,\alpha}'\right)c(e_0)-c(r(x)e_1) 
\end{equation}
under the isomorphism $W_0'\otimes R\cong \tilde W$, 
where $r:M_0\times \R\to \R$ is the projection and we put 
$D_{0,\alpha}'=\rho_{\alpha}D_{0,\alpha}\rho_{\alpha}$.  
%is the Dirac-type operator 
%along leaves which is obtained by the restriction of $\{D_{\gamma}'\}_{\gamma\in \hat A}$. 
\end{definition}

As in the same way in Proposition~\ref{indDta}, there exists $T^{\rm mod}>0$ such that $\ker_{L^2}(\bD_{t}^{\rm mod})$ is a finite-dimensional vector space 
and its index $\ind(\bD_{t}^{\rm mod})$ can be defined for any $t>T^{\rm mod}$ as 
its super-dimension. 

\begin{prop}\label{kermod=kerDt}
As a $\Z/2$-graded vector space $\ker_{L^2}(\bD_{t}^{\rm mod})^2$ is isomorphic to 
$\ker_{L^2}(D+t\sum_{\alpha}D_{0,\alpha}')^2\otimes \R^2$ for any $t>T^{\rm mod}$. 
In particular we have 
$$
\ind(\bD_t^{\rm mod})=2\ind(D+t\sum_{\alpha}D_{0,\alpha}')=2\ind(M_0, V_0). 
$$
\end{prop}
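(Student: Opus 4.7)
The plan is to mimic the analogous model-operator computation in the complex setting (Braverman's Lemma~10.8 in \cite{Bravermancobinv}), adapted to the real Clifford module bundle of the present paper. The strategy is to apply the Bochner-type identity of Proposition~\ref{laplacian}, thereby reducing the $L^2$-kernel computation to a spectral analysis of a harmonic oscillator coupled with $D_t^2$, where $D_t := D + t\sum_\alpha D'_{0,\alpha}$.

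First I would specialize Proposition~\ref{laplacian} to the model operator $\bD^{\rm mod}_t$ on $M_0\times\R$, which is precisely a $\bD_{t,0}$-type operator with proper function $p=r$ and $a=0$. Using the Clifford relations $c(v_r)^2=-1$, $c(e_0)^2=c(e_1)^2=1$, the pairwise anti-commutations among $c(v_r),c(e_0),c(e_1)$, and the anti-commutation of $D_t$ with each of these symbols (which follows from property (5) of compatible systems in Definition~\ref{compatible system} extended to the full indefinite Clifford action), one rewrites
\[
(\bD^{\rm mod}_t)^2 \;=\; D_t^2 \;+\; \bigl(-\partial_r^2 + r^2\bigr) \;+\; B,
\]
where $B$ is a uniformly bounded bundle endomorphism (up to a zeroth-order constant shift whose sign depends on normalization conventions). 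The horizontal Laplacian $D_t^2$ acts on sections of $W'_0$, the vertical harmonic oscillator $-\partial_r^2+r^2$ acts on $L^2(\R)$, and all would-be cross terms collapse by Clifford anti-commutation.

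Second, since $D_t^2\ge 0$ and $-\partial_r^2+r^2\ge 1$, the right-hand side is strictly positive except on the joint zero-eigenspace of the two commuting pieces; choosing $t$ large enough, namely the constant $T^{\rm mod}$ in the statement, absorbs the bounded correction $B$ and preserves this positivity. Any $L^2$-solution of $(\bD^{\rm mod}_t)^2 s=0$ must therefore factor as $s(y,r) = \phi(y)\,e^{-r^2/2}$ with $\phi\in\ker_{L^2}(D_t)$, since the $r$-profile is forced to the one-dimensional Gaussian ground state of the harmonic oscillator. Combined with the two-dimensional irreducible real representation $R$ of $Cl_{1,2}$ entering the tensor decomposition $\tilde W\cong W'_0\otimes R$, this yields the claimed isomorphism of $\Z/2$-graded vector spaces
\[
\ker_{L^2}\bigl((\bD^{\rm mod}_t)^2\bigr) \;\cong\; \ker_{L^2}(D_t)\otimes\R^2 .
\]
Taking super-dimensions with respect to the $c(\epsilon)$-grading, which lives entirely on the $W'_0$ factor, and invoking $\ind(D_t)=\ind(M_0,V_0)$ from Theorem~\ref{def of local ind}, delivers $\ind(\bD^{\rm mod}_t) = 2\ind(M_0,V_0)$.

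The main obstacle is the Bochner-type identity of Step~1: verifying that when $(\bD^{\rm mod}_t)^2$ is fully expanded, the many potential cross terms coming from the $\partial_r$-derivative of the linear potential $r$, from commutators between $D_t$ and the cut-off functions inside $D'_{0,\alpha}$, and from interchanges of the Clifford symbols either vanish by anti-commutation or combine cleanly into a uniformly bounded zeroth-order endomorphism. A secondary bookkeeping subtlety is the $\Z/2$-grading through the tensor decomposition $\tilde W\cong W'_0\otimes R$: since $R$ carries no grading making all three of $c(v_r),c(e_0),c(e_1)$ odd simultaneously, the natural $\Z/2$-grading of $\tilde W$ comes entirely from $c(\epsilon)$ on $W'_0$, which is precisely what produces the multiplicative factor of $2$ in the final index identity rather than an equality.
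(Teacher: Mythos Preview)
Your approach diverges from the paper's in two essential respects, and the first of these is a genuine gap.

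\textbf{The exact square versus your Bochner-with-remainder.}  The paper does not invoke Proposition~\ref{laplacian} at all for the model operator.  Instead it exploits the specific Clifford identity $c(v_r)c(e_0)=-c(e_1)$, valid in the irreducible real representation~(\ref{realirrrep}), to split $\bD_t^{\rm mod}$ as the sum of the horizontal piece $D_t\,c(e_0)$ and the vertical piece $c(v_r)c(e_0)\partial_r - r\,c(e_1)=-c(e_1)(\partial_r+r)$.  These two anti-commute \emph{exactly}, yielding
\[
(\bD_t^{\rm mod})^2 \;=\; D_t^2 + (\partial_r+r)^2,
\]
with no remainder term whatsoever.  Your claimed identity $(\bD_t^{\rm mod})^2 = D_t^2 + (-\partial_r^2+r^2) + B$ with $B$ bounded is not correct for this operator: expanding directly gives $D_t^2 + \partial_r^2 + 2r\partial_r + r^2 + 1$, and the discrepancy from the harmonic oscillator is the unbounded second-order operator $2\partial_r^2 + 2r\partial_r + 1$, which no zeroth-order $B$ can absorb.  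Consequently your Step~2 (``absorb $B$ for large $t$'') has nothing valid to absorb, and even if it did, a positivity bound would not by itself produce the precise kernel isomorphism the proposition asserts.

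\textbf{Where the factor $\R^2$ comes from.}  In the paper's argument the two-dimensional factor is \emph{not} the representation $R$; it is the $L^2$-kernel of the scalar operator $(\partial_r+r)^2$ on $\R$, which is two-dimensional and spanned by $e^{-r^2/2}$ and $r\,e^{-r^2/2}$ (the square of a non-self-adjoint first-order operator, not a harmonic oscillator with one-dimensional ground state).  Your bookkeeping, which takes a one-dimensional Gaussian ground state and then tensors with the two-dimensional $R$, arrives at the same numerical factor but by a different and, given the incorrect Bochner formula above, unjustified mechanism.

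In short, the missing idea is the algebraic relation $c(v_r)c(e_0)=-c(e_1)$, which collapses the vertical part to $-c(e_1)(\partial_r+r)$ and makes the decomposition exact; once you have that, no error term and no large-$t$ absorption are needed.
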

\begin{proof}
Since $\left(r^*D+t\sum_{\alpha} D_{0,\alpha}'\right)c(e_0)$ and 
$c(v_r)c(e_0)\partial_r-c(r(x)e_1)$ anti-commutes and $c(v_r)c(e_0)=-c(e_1)$
we have 
\begin{eqnarray*}
(\bD_t^{\rm mod})^2&=&\left(r^*D+t\sum_{\alpha}D_{0,\alpha}'\right)^2+\left(c(v_r)\partial_r c(e_0)-c(r(x)e_1)\right)^2\\
%&=&\left(r^*D+t\sum_{\alpha}D_{0,\alpha}'\right)^2 +
%c(e_1)^2(\partial_r+r(x))^2 \\
&=&\left(r^*D+t\sum_{\alpha}D_{0,\alpha}'\right)^2 +
(\partial_r+r(x))^2,  
\end{eqnarray*} 
and hence, 
%$$
%(\bD_t^{\rm mod})^2|_{\Gamma(W_0^{\pm}\otimes R)}=
%\left(r^*D+t\sum_{\alpha}D_{0,\alpha}'\right)^2\otimes 1 +
%(-\partial_r^2\otimes 1+r^2\otimes 1\pm 1\otimes c_{1,2}(e_0)c_{1,2}(e_1)). 
%$$ 
$$
\ker_{L^2}(\bD_t^{\rm mod})^2\cong 
\ker_{L^2}\left(D+t\sum_{\alpha}D_{0,\alpha}'\right)^2\otimes
\ker_{L^2}(\partial_r+r)^2
$$as $\Z/2$-graded vector spaces. 
Since 
the space $\ker_{L^2}(\partial_r+r)^2$ is a two-dimensional real vector space 
generated by $f_1(r)=e^{-\frac{r^2}{2}}$ and $f_2(r)=re^{-\frac{r^2}{2}}$ without $\Z/2$-grading, 
we complete the proof. 
\end{proof}

\subsection{Definition of $\bD_{t,a}^{\rm mod}$}
For $a\in \R$ and $t>0$ we define $\bD_{t,a}^{\rm mod}:\Gamma(\tilde W)\to \Gamma(\tilde W)$ by 
\begin{equation}\label{modelop2}
\bD^{\rm mod}_{t,a}=\left(r^*D+c(v_r)\partial_r+t\sum_{\alpha} D_{0,\alpha}'\right)c(e_0)-(r(x)-a)c(e_1).  
\end{equation}
Let $\tau_a:M_0\times \R\to M_0\times \R$, 
$\tau_a(x,r)=(x,r+a)$ be the translation. 
Then we have $\bD_{t,a}^{\rm mod}=\tau_{-a}^*\circ\bD_{t}^{\rm mod}\circ\tau_a^*$, and hence,  
$\ind(\bD_{t,a}^{\rm mod})=\ind(\bD_t^{\rm mod})$ for any $t>T^{\rm mod}$ and $a\in \R$. 
%, where $T$ is the constant appeared as in Proposition~\ref{indDta}. 
%
Let $\bD_{t,a}^{\pm}$  (resp. $\bD^{\rm mod, \pm}_t$ and $\bD_{t,a}^{\rm mod, \pm}$) be the restriction of 
$\bD_{t,a}$  (resp. $\bD^{\rm mod}_t$ and $\bD_{t,a}^{\rm mod}$) to $\Gamma(\tilde W^{\pm})$. 
%where the $\Z/2$-grading $\tilde W=\tilde W^+\oplus \tilde W^-$ is 
%given by $c(\epsilon)=\pm 1$. 

For a self-adjoint operator $P$ with discrete spectrum and a real number $\lambda\in \R$, 
let $N(\lambda,P)$ be the number of eigenvalues of $P$ less than or equal to $\lambda$. 
%As in the same way for the proof of Proposition~?? 
% and Theorem~\ref{} in \cite{Bravrman} 
%we can show the following. 

\begin{prop}\label{keyprop}
Let $\lambda_{\pm}$ be the smallest nonzero eigenvalue of $(\bD_t^{\rm mod, \pm})^2$. 
For any $\varepsilon>0$ there exists $A=A(\varepsilon,M,W,t)>0$ such that 
$$
N(\lambda_{\pm}-\varepsilon, (\bD_{t,a}^{\pm})^2)=\dim\ker_{L^2}(\bD_t^{\rm mod, \pm})^2 
$$for any $t>\max\{T,T^{\rm mod}\}$ and $a>A$. 
In particular we have 
$$\ind(\bD_{t,a})=\ind(\bD_t^{\rm mod}). $$
\end{prop}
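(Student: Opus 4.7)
The plan is to compare the low–lying spectrum of $(\bD_{t,a}^\pm)^2$ with that of the translated model $(\bD_{t,a}^{{\rm mod},\pm})^2$ by IMS localization, essentially parallel to the treatment in \cite{Braverman,Bravermancobinv}. The key observation is that by Proposition~\ref{laplacian} the squared operator decomposes as $\bD_{t,a}^2 = (\tilde D'_t)^2 + B + |p(x)-a|^2$, so the ``potential'' $|p(x)-a|^2$ confines low–energy sections to a neighborhood of the slice $\{p(x)=a\}$, which for $a\gg 0$ lies deep inside the cylinder $M_0\times[0,\infty)$ where, thanks to the translational invariance of the completed data arranged in Subsection~\ref{The perturbation of Dirac-type operator}, pull-back by $\tau_a$ identifies $\bD_{t,a}$ with the model $\bD_t^{\rm mod}$ of (\ref{modelop1}).

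For the upper bound $N(\lambda_\pm-\varepsilon,(\bD_{t,a}^\pm)^2)\le \dim\ker_{L^2}(\bD_t^{{\rm mod},\pm})^2$, I would fix a quadratic partition of unity $\chi_1^2+\chi_2^2=1$ on $\tilde M$ with $\chi_1$ supported in the strip $\{(y,r):|r-a|\le R\}$, $\chi_2$ supported outside a slightly smaller strip, and $|\nabla\chi_i|^2\le C/R^2$. The standard IMS formula then gives
$$
(\bD_{t,a}^\pm)^2 \;=\; \chi_1(\bD_{t,a}^\pm)^2\chi_1 \;+\; \chi_2(\bD_{t,a}^\pm)^2\chi_2 \;-\; |\nabla\chi_1|^2 - |\nabla\chi_2|^2.
$$
On $\operatorname{supp}\chi_2$ the contribution $|p(x)-a|^2\ge R^2/4$ inside the cylinder and $a^2$ outside the cylinder, together with Proposition~\ref{laplacianestimate} on the acyclic part, yields $\chi_2(\bD_{t,a}^\pm)^2\chi_2\ge \lambda_\pm\chi_2^2$ once $R,a$ and $t$ are large. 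On $\operatorname{supp}\chi_1$, conjugation by $\tau_a^*$ identifies $\chi_1(\bD_{t,a}^\pm)^2\chi_1$ with a compactly localized version of $(\bD_t^{{\rm mod},\pm})^2$, whose spectrum has a gap $[\lambda_\pm,\infty)$ above the finite-dimensional kernel. Applying the min–max principle to the orthogonal complement of the approximate zero modes produced below then yields the claimed upper bound, once $R$ is chosen so that $C/R^2$ and the cutoff error are each $<\varepsilon/2$.

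For the reverse inequality, I would take an orthonormal basis $\{\psi_i^\pm\}$ of $\ker_{L^2}(\bD_t^{{\rm mod},\pm})^2$ and form test sections $\tilde\psi_i^\pm:=\chi\cdot\tau_a^*\psi_i^\pm$, for a fixed cutoff $\chi$ in the cylinder. By Proposition~\ref{kermod=kerDt} each $\psi_i^\pm$ is a $\ker_{L^2}(D+t\sum D_{0,\alpha}')$-element tensored with the Gaussian factor $e^{-r^2/2}$ or $re^{-r^2/2}$, so after translation the bulk sits at $r=a$; for $A$ large the cutoff error and the commutator $[\bD_{t,a},\chi]\tau_a^*\psi_i^\pm$ are both $O(e^{-ca^2})$, so that $\|\bD_{t,a}^\pm\tilde\psi_i^\pm\|^2/\|\tilde\psi_i^\pm\|^2<\lambda_\pm-\varepsilon$. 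Min–max then produces at least $\dim\ker_{L^2}(\bD_t^{{\rm mod},\pm})^2$ eigenvalues of $(\bD_{t,a}^\pm)^2$ below $\lambda_\pm-\varepsilon$, proving the $N$-count equality. For the index identity I use that $\bD_{t,a}$ is odd and self-adjoint Fredholm, so $\bD_{t,a}^+$ restricts to an isomorphism between the $E$-eigenspaces of $(\bD_{t,a}^+)^2$ and $(\bD_{t,a}^-)^2$ for every $E>0$ (so in particular $\lambda_+=\lambda_-$), giving
$$
\ind(\bD_{t,a}) \;=\; N(\lambda_+-\varepsilon,(\bD_{t,a}^+)^2)-N(\lambda_+-\varepsilon,(\bD_{t,a}^-)^2) \;=\; \dim\ker_{L^2}(\bD_t^{{\rm mod},+})^2-\dim\ker_{L^2}(\bD_t^{{\rm mod},-})^2 \;=\; \ind(\bD_t^{\rm mod}).
$$

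The principal obstacle is the quantitative balance in the upper bound: the cutoff radius $R$ must be large enough that $\chi_1(\bD_{t,a}^\pm)^2\chi_1$ faithfully reproduces the low-lying model spectrum (making the cutoff error $o(1)$), yet small enough compared to $a$ that the potential on $\operatorname{supp}\chi_2$ together with Proposition~\ref{laplacianestimate} delivers the uniform lower bound $\ge\lambda_\pm$ after absorbing the IMS remainder $|\nabla\chi_i|^2$. Verifying that these constraints can be simultaneously satisfied, and that the identification with the model via $\tau_a^*$ is truly isometric on $\operatorname{supp}\chi_1$ (which rests on the translational invariance of the completed geometry), is where I expect the substantive technical work to reside.
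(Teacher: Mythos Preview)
Your approach is correct and is essentially the IMS localization argument of \cite[Proposition~11.6]{Braverman}, which is exactly what the paper invokes. The paper does not spell out the details either; it simply records the one modification needed, namely the double-commutator estimate $\|[J_a,[J_a,\bD_{t,a}^2]]\|\le K/a$ (replacing \cite[Lemma~11.14]{Braverman}), which is required because the principal symbol of $\bD_{t,a}$ is $c\circ({\rm id}+t\sum_\gamma\rho_\gamma^2\iota_\gamma^*)c(e_0)$ rather than the bare Clifford symbol.

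Two minor differences are worth noting. First, the paper (following Braverman) uses cutoffs $J_a(x)=j(a^{-1/2}p(x))$, $\bar J_a(x)=\bar j(a^{-1/2}p(x))$ that transition at $r\sim\sqrt{a}$; this choice is symmetric in the sense that the \emph{same} cutoffs localize both $(\bD_{t,a})^2$ on $\tilde M$ and $(\bD_{t,a}^{\rm mod})^2$ on $M_0\times\R$, so the comparison of low spectra is direct and one avoids the separate upper/lower bound arguments you sketch. Your cutoff centered at $r=a$ with width $R$ also works, but requires the extra step of matching the localized model to the full model via Gaussian decay. Second, you write the IMS remainder as $-|\nabla\chi_1|^2-|\nabla\chi_2|^2$; strictly speaking the remainder is the double commutator $-\tfrac12\sum_i[\chi_i,[\chi_i,\bD_{t,a}^2]]$, and this is where the modified symbol enters. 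In your setup this simplification is harmless because your $\chi_i$ depend only on the cylindrical coordinate $r$, and $\iota_\gamma^*(dr)=0$ since the leaves on the cylinder lie in the $M_0$-direction; but it is precisely this point that the paper isolates as the one lemma needing adjustment.
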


%Proposition~\ref{keyprop} implies $\ind(\bD_{t,a})=\ind(\bD_t^{\rm mod})$. 
Together with Proposition~\ref{kermod=kerDt} the above Proposition~\ref{keyprop} implies 
the following theorem, and hence, the proof of 
Theorem~\ref{cobinvreal} is finished by Proposition~\ref{indDta=0}.

\begin{theorem}For any $t>\max\{T, T^{\rm mod}\}$ and $a>A$ we have 
$$
\ind(\bD_{t,a})=2\ind(M_0,V_0). 
$$
\end{theorem}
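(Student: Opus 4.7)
The plan is to assemble the theorem as a direct composition of the two propositions proved immediately before, with no new analytic input. By Proposition~\ref{keyprop}, for $t>\max\{T,T^{\rm mod}\}$ and $a>A$ the counting function identity $N(\lambda_{\pm}-\varepsilon,(\bD_{t,a}^{\pm})^2)=\dim\ker_{L^2}(\bD_t^{\rm mod,\pm})^2$ forces
\[
\ind(\bD_{t,a}) = \ind(\bD_t^{\rm mod}),
\]
the heuristic being that the potential $|p(x)-a|^2$ appearing in the Lichnerowicz-type formula of Proposition~\ref{laplacian} localizes low eigenmodes near the slice $p(x)=a$, on which $\bD_{t,a}$ is conjugate via the translation $\tau_a$ to the cylindrical model operator $\bD_t^{\rm mod}$.

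Next I would invoke Proposition~\ref{kermod=kerDt}, whose separation of variables on the cylinder $M_0\times\R$ produces the factorization
\[
\ker_{L^2}(\bD_t^{\rm mod})^2 \;\cong\; \ker_{L^2}\Bigl(D+t\sum_{\alpha}D'_{0,\alpha}\Bigr)^2 \otimes \ker_{L^2}(\partial_r+r)^2
\]
as $\Z/2$-graded vector spaces, with the harmonic-oscillator factor being a two-dimensional \emph{ungraded} space spanned by $e^{-r^2/2}$ and $re^{-r^2/2}$. Taking super-dimensions turns this tensor product into the scalar identity $\ind(\bD_t^{\rm mod})=2\ind(M_0,V_0)$. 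Chaining the two displayed equalities yields the claimed formula.

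Since both ingredients are already in hand, there is no remaining obstacle; the substantive work lies entirely in Proposition~\ref{keyprop} (the Braverman-style spectral-gap argument) and in the algebraic factorization of $(\bD_t^{\rm mod})^2$. I would conclude by remarking that, combined with Proposition~\ref{indDta=0} which gives $\ind(\bD_{t,a})=0$, the present theorem immediately forces $\ind(M_0,V_0)=0$, completing the proof of Theorem~\ref{cobinvreal} in the reduced case $\partial M=M_0$ and hence in general by additivity of the local index.
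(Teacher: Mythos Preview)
Your proposal is correct and matches the paper's own argument exactly: the theorem is stated immediately after Proposition~\ref{keyprop} with the sentence ``Together with Proposition~\ref{kermod=kerDt} the above Proposition~\ref{keyprop} implies the following theorem,'' so the paper simply chains $\ind(\bD_{t,a})=\ind(\bD_t^{\rm mod})$ from Proposition~\ref{keyprop} with $\ind(\bD_t^{\rm mod})=2\ind(M_0,V_0)$ from Proposition~\ref{kermod=kerDt}, just as you do. Your closing remark about combining with Proposition~\ref{indDta=0} to conclude Theorem~\ref{cobinvreal} also mirrors the paper verbatim.
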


%Proposition~\ref{keyprop} can be shown by proving 
%$$
%N(\lambda_{\pm}-\varepsilon, (\bD_{t,a}^{\pm})^2)\leq\dim\ker(\bD_t^{\rm mod, \pm})^2  
%$$and 
%$$
%N(\lambda_{\pm}-\varepsilon, (\bD_{t,a}^{\pm})^2)\geq\dim\ker(\bD_t^{\rm mod, \%pm})^2. 
%$$
For each $a>0$ let $j,\bar j$, $J_a$ and $\bar J_a$ be the smooth functions as in 
\cite[Subsection~11.9]{Braverman}. 
Namely let $j,j:\R\to [0,1]$ be smooth functions such that 
$j^2+\bar j^2=1$, $j(r)=1$ for $r\geq 3$ and $j(r)=0$ for $r\leq 2$.  
We also define smooth functions $J_a, \bar J_a : M_0\times \R\to [0,1]$
 by 
$$
J_a(x):=j(a^{-1/2}r(x)), \quad  \bar J_a(x)=\bar j(a^{-1/2}r(x)),  
$$where $r:M_0\times\R\to M_0$ is the projection. 
We will denote by the same letters for smooth functions 
on $\tilde M$ defined by the formula 
$$
J_a(x):=j(a^{-1/2}p(x)), \quad  \bar J_a(x)=\bar j(a^{-1/2}p(x)), 
$$where $p:\tilde M\to \R$ is the smooth function defined in 
Subsection~\ref{The perturbation of Dirac-type operator}. 

The proof of Proposition~\ref{keyprop} is 
almost same as that for \cite[Proposition~11.6]{Braverman}. 
In \cite{Braverman} Braverman used IMS localization formula 
using $J_a $ and $\bar J_a$ due to \cite{Shubin}. 
We only note that we have to change \cite[Lemma~11.14 ]{Braverman} to the following lemma. 
\begin{lemma}
%Put $2\max\left\{\max\{|dj(t)|^2, |d\bar j(t)|^2|\} \ | \ t\in \R \right\}$. 
There exists a positive constant $K=K(M,W, t)>0$ such that 
$$
\|\left[J_a, [J_a, \bD^2_{t,a}]\right]\|, \|\left[\bar J_a, [\bar J_a, \bD^2_{t,a}]\right]\|\leq\frac{K}{a}
$$for any $a>0$.
\end{lemma}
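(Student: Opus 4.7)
The plan is to reduce everything to a pointwise bound on $|dJ_a|$, by exploiting the fact that the quadratic-in-$p(x)-a$ term and the bundle endomorphism $B$ from Proposition~\ref{laplacian} are multiplication operators. More precisely, Proposition~\ref{laplacian} gives
\[
\bD_{t,a}^2=(\tilde D'_t)^2+B+|p(x)-a|^2,
\]
and since $J_a$ is multiplication by a smooth function, it commutes pointwise with $B$ and with $|p(x)-a|^2$. Therefore $[J_a,[J_a,\bD_{t,a}^2]]=[J_a,[J_a,(\tilde D'_t)^2]]$, and the same reduction works for $\bar J_a$. This is where the $a$-dependent term is pleasantly eliminated.

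The second step will be the classical observation that for any second-order differential operator $P$ on sections of $\tilde W$ with principal symbol $\sigma_2(P)$, and any smooth function $\phi$, the iterated commutator $[\phi,[\phi,P]]$ is a zeroth-order endomorphism equal to $2\sigma_2(P)(d\phi,d\phi)$. Consequently its operator norm is controlled by the supremum norm of $|d\phi|^2$ multiplied by a constant that depends only on the principal symbol of $P$. Applying this to $P=(\tilde D'_t)^2$, whose principal symbol is $\bigl(\sigma(\tilde D)(\xi)+t\sum_{\gamma}\rho_\gamma^2\sigma(D_\gamma)(\xi)\bigr)^2$ precomposed with $c(e_0)$-twists, yields
\[
\bigl\|[\phi,[\phi,(\tilde D'_t)^2]]\bigr\|\le C(t,M,\tilde W)\,\|d\phi\|_\infty^2
\]
with $C$ independent of $\phi$. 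Here one needs the uniform boundedness of the symbol on all of $\tilde M$, which is guaranteed by the completion/cylindrical-end construction of Subsection~\ref{The perturbation of Dirac-type operator} together with the local finiteness of the cover $\{\tilde V_\gamma\}$.

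The third step is the direct computation of $dJ_a$ and $d\bar J_a$. Since $J_a(x)=j(a^{-1/2}p(x))$,
\[
dJ_a(x)=a^{-1/2}\,j'\!\bigl(a^{-1/2}p(x)\bigr)\,dp(x),
\]
and the analogous identity holds for $\bar J_a$. The derivative $j'$ is uniformly bounded (with compact support), and $p$ is zero on $M\subset\tilde M$ and agrees with the smoothed coordinate $s(r)$ on the cylinder, so $|dp|$ is uniformly bounded by a constant depending only on $s$. Hence $\|dJ_a\|_\infty^2\le C'/a$ and $\|d\bar J_a\|_\infty^2\le C'/a$, and combining with the estimate of the previous paragraph produces the desired bound $K/a$ with $K=K(M,\tilde W,t)$.

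The only genuine pitfall is ensuring the $a$-independent uniform bound on the principal symbol of $(\tilde D'_t)^2$ over the non-compact manifold $\tilde M$; this is why the completion step at the start of Section~\ref{Proof of Theorem{realcobinv}}, and the assumption that the compatible system is translation-invariant on the cylindrical end, are essential. Everything else is a routine symbolic calculation, and the argument for $\bar J_a$ is word-for-word identical by the symmetry $j^2+\bar j^2=1$.
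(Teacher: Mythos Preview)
Your proof is correct and follows essentially the same approach as the paper's. The paper's one-line argument simply records that the principal symbol of $\bD_{t,a}$ is $c\circ(\id+t\sum_{\gamma}\rho_{\gamma}^2\iota_{\gamma}^*)c(e_0)$, hence independent of $a$; your reduction via Proposition~\ref{laplacian} to $(\tilde D'_t)^2$ is just another way of stripping off the zeroth-order $a$-dependent part before invoking the standard formula $[\phi,[\phi,P]]=-2\sigma_2(P)(d\phi,d\phi)$ and the estimate $\|dJ_a\|_\infty^2=O(1/a)$.
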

\begin{proof}
The estimate follows from the fact that 
the principal symbol of $\bD_{t,a}$ has the form 
$$
c\circ({\rm id}+t\sum_{\gamma}\rho_{\gamma}^2\iota_{\gamma}^*)c(e_0), 
$$where $c$ is the Clifford multiplication on $\tilde W$ and 
$\iota_{\gamma}^*$ is the dual of the natural inclusion 
from $T\hat\SF_{\gamma}$ to $T\hat V_{\gamma}$. 
\end{proof}

\subsection{Comments on the proof of complex case}
\label{Comments on the complex case}
In this subsection we give comments on the proof of Theorem~\ref{cobinvref}, 
which can be proved by the almost same argument as that for 
Theorem~\ref{cobinvreal} or in \cite{Braverman, Bravermancobinv}. 
We explain how we modify the perturbation etc. 

Let $M$ be a Riemannian manifold with boundary $\partial M=M_0$. 
Let $\hat W\to M$ be the complex Clifford module bundle as in the set-up in 
Subsection~\ref{Complex Clifford module bundle case}. 
Namely $\hat W$ is a complex $Cl(TM\oplus \R e_L\oplus \R^{p,q}\oplus \R e_R \oplus \R e_R')$-module bundle, where the metric is defined by $e_L\cdot e_L=1$ and 
$e_R\cdot e_R=e_R'\cdot e_R'=-1$.  
Let $W={\rm Hom}_{Cl_{1,2}}(R_{\C}, \hat W)\to M$ be the associated $Cl(TM\oplus \R^{p,q})$-module bundle for the irreducible complex representation 
$R_{\C}$ of $Cl_{1,2}=Cl(\R e_L\oplus \R e_R\oplus \R e_R')$. 
Suppose that there exists an open subset $V$ of $M$ whose complement is compact and $V$ is equipped with an acyclic compatible system 
$\{V_{\gamma}, \SF_{\gamma}, \hat W, \hat D_{\gamma}\}_{\gamma\in \hat A}$. 
By attaching the cylinder $M_0\times [0,\infty)$ we have the 
associated non-compact manifold $\tilde M$, its open subset $\tilde V$ 
and the induced $G$-tangential acyclic compatible system 
$\{\tilde V_{\gamma}, \tilde\SF_{\gamma}, \tilde W, \tilde D_{\gamma}\}_{\gamma\in \hat A}$ on $\tilde V$. 
When we put $V_0:=M_0\cap V$ 
we have the induced $G$-tangential acyclic compatible system 
$\{V_{0,\alpha}, \SF_{0,\alpha}, W_0, D_{0,\alpha}\}_{\alpha\in A}$ 
over $V_0$, 
where $W_0=W|_{M_0}=W_0^+\oplus W_0^-$ is  the $\Z/2$-graded $Cl(TM_0)$-module 
whose grading is given by $c(v_r)=\pm\sqrt{-1}$. 
We first take and fix a Dirac-type operator $D$ on $\Gamma(W_0)$ and 
$\tilde D$ on $\Gamma(\tilde W)$ such that 
\begin{equation}
\tilde D|_{M_0\times (0,\infty)}=\sqrt{-1}\left(\pi^*D+c(v_r)\partial_r\right)\otimes c_{1,2}(e_L) 
\end{equation}
under the isomorphism $\tilde W|_{M_0\times (0,\infty)}\cong \pi^*W_0\otimes 
R_{\C}$. 
For $t>0$ and $a\in \R$ we consider the perturbations 
$$
D_t=\tilde D+t\sum_{\gamma\in \hat A}\tilde D_{\gamma}' \quad {\rm and} \quad 
\bD_{t,a}=D_t-c((p(x)-a)e_R)
$$
instead of the perturbation~(\ref{perturbation1}) and 
(\ref{perturbation2}), which acts on $\Gamma(\tilde W)$. 
As in the same way we consider the model operator on $\Gamma(\tilde W|_{M_0}\times \R)$ and 
its perturbation 
$$
\bD_{t}^{\rm mod}=\sqrt{-1}\left(r^*D+c(v_r)\partial_r+t\sum_{\alpha\in A_0}D_{0,\alpha}'\right)\otimes c_{1,2}(e_L)-1\otimes c_{1,2}((p(x))e_R),  
$$
$$
\bD_{t,a}^{\rm mod}=\sqrt{-1}\left(r^*D+c(v_r)\partial_r+t\sum_{\alpha\in A_0}D_{0,\alpha}'\right)\otimes c_{1,2}(e_L)-1\otimes c_{1,2}((p(x)-a)e_R).
%=\tau_{-a}^*\circ \bD_{t}^{\rm mod}\circ \tau_a^*. 
$$
Using these operators we can give a proof of Theorem~\ref{cobinvref} 
as for the same argument for the proof of Theorem~\ref{cobinvreal}. 
%%%%%%%%%%%%%%%%%%%%%%%%%%%%%%%%%%%%%%%%%
\section{Well-definedness of the local index}
\label{Well-definedness of the local index} 
Though the local index is invariant under the continuous deformation of the data, it is not clear that it does not depend on the choice of the open covering. 
We show that the local index does not depend on it for a suitable class of 
compatible systems. 

\begin{theorem}\label{well def of local ind}
Let $M$ be a Riemannian manifold. 
Let $W\to M$ be a $\Z/2$-graded $Cl(TM\oplus \R^{p,q})$-module bundle. 
Let $V_1$ and $V_2$ be two open subsets whose complements in $M$ are compact. 
Suppose that for $i=1,2$ there exist $G$-tangential acyclic compatible systems 
$\{V_{i,\alpha}, \SF_{i,\alpha}, W|_{V_i}, D_{i,\alpha}\}_{\alpha\in A_i}$. 
If the union 
$\{V_{1,\alpha}, V_{2,\beta}, \SF_{1,\alpha},\SF_{2,\beta}, W|_{V_1\cup V_2}, D_{1,\alpha}, D_{2,\beta}\}_{\alpha,\beta\in A_1\cup A_2}$ is also a $G$-tangential 
acyclic compatible system on $V_1\cup V_2$, then 
we have 
$\ind(M,V_1)=\ind(M,V_2)$. 
\end{theorem}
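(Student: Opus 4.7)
The plan is to deduce Theorem~\ref{well def of local ind} directly from the cobordism invariance (Theorem~\ref{maintheorem}) by exhibiting an explicit, acyclic, $G$-tangential cobordism between the two compatible systems on $M$.

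First I would take $\tilde M := M \times [0,1]$ equipped with the product metric, with the boundary written as $M_1 \sqcup M_2 = M \times \{0\} \sqcup M \times \{1\}$; the $i$-th boundary component carries the given acyclic $G$-tangential compatible system $\{V_{i,\alpha},\SF_{i,\alpha},W|_{V_i},D_{i,\alpha}\}_{\alpha\in A_i}$. For the Clifford bundle over $\tilde M$, take $\tilde W:=\pi^{*}W$ where $\pi\colon\tilde M\to M$ is the projection, and extend the Clifford action to $Cl(T\tilde M\oplus\R^{p,q})$ by letting $c(dt)$ act as $\sqrt{-1}\,\epsilon_W$ on the graded bundle $W=W^+\oplus W^-$, so that $c(dt)|_{W^{\pm}}=\pm\sqrt{-1}$ and condition (vi) of Definition~\ref{cobofsys} is satisfied at both ends (with the correct sign on each boundary component by reversing orientation at $t=1$).

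Next, fix a small $\delta>0$ and put
$$\tilde V:=\bigl(V_1\times[0,1-\delta)\bigr)\cup\bigl(V_2\times(\delta,1]\bigr)\subset\tilde M,$$
which is open; its complement decomposes as $(M\smallsetminus V_1)\times[0,\delta]\cup(M\smallsetminus(V_1\cup V_2))\times(\delta,1-\delta)\cup(M\smallsetminus V_2)\times[1-\delta,1]$ and is therefore compact. On $\tilde V$ I take the open cover $\{\tilde V_{1,\alpha}:=V_{1,\alpha}\times[0,1-\delta)\}_{\alpha\in A_1}\cup\{\tilde V_{2,\beta}:=V_{2,\beta}\times(\delta,1]\}_{\beta\in A_2}$, with foliations $\tilde\SF_{i,\cdot}$ whose leaves are $L\times\{t\}$ for $L$ a leaf of $\SF_{i,\cdot}$. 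The Dirac-type operators $\tilde D_{i,\cdot}$ are defined fiberwise in the $t$-variable from $D_{i,\cdot}$, and the $G$-action on $\tilde M$ is the product action (trivial on the $[0,1]$-factor). By construction, the restriction of this compatible system to $t=0$ is exactly the system on $V_1$ (only $\tilde V_{1,\alpha}$ pieces reach $t=0$) and to $t=1$ is exactly the system on $V_2$; it is $G$-tangential since each leaf lies in a $G$-orbit; and it is acyclic because on each overlap $\tilde V_{1,\alpha}\cap\tilde V_{2,\beta}=(V_{1,\alpha}\cap V_{2,\beta})\times(\delta,1-\delta)$ the anti-commutator $\{\tilde D_{1,\alpha},\tilde D_{2,\beta}\}$ coincides with $\{D_{1,\alpha},D_{2,\beta}\}$, which is non-negative by the hypothesis that the union is an acyclic compatible system on $V_1\cup V_2$, while the leaf-wise kernels agree with those of $D_{i,\cdot}$ tensored with a trivial $t$-direction. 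Applying Theorem~\ref{maintheorem} then yields $\ind(M,V_1)=\ind(M,V_2)$.

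The main place where care is required is in checking condition (iv) of Definition~\ref{coboffib}: the leaves in a tubular neighborhood of $\partial\tilde M$ should be identified with products of intervals and leaves of $\SF_{i,\cdot}$. With the naive definition above, the leaves $L\times\{t\}$ near the boundary are just copies of $L$, which is the degenerate (point-interval) case; if the definition is read strictly, I would thicken the foliation near each boundary component by taking leaves of the form $L\times[0,\varepsilon)$ on a small collar and then smoothly transitioning to the product foliation $L\times\{t\}$ further into the cobordism (the hypothesis that the union is itself a compatible system guarantees this transition can be made compatibly). All other axioms---the $G$-equivariant isometric boundary identification, the formal self-adjointness, the Clifford-relation conditions, and condition (v) on the isometry of $\tilde W|_{V_0}$ with the product bundle---follow automatically from the product construction and the assumption that $\{V_{1,\alpha},V_{2,\beta},\SF_{1,\alpha},\SF_{2,\beta},W|_{V_1\cup V_2},D_{1,\alpha},D_{2,\beta}\}$ is already a $G$-tangential acyclic compatible system on $V_1\cup V_2$.
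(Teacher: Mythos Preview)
Your proof is essentially the paper's own argument: set $\tilde M = M\times[0,1]$, cover $\tilde V$ by the two overlapping strips $V_1\times[0,2/3]$ and $V_2\times[1/3,1]$ (your $\delta=1/3$) with the product foliations and pulled-back operators, note that acyclicity on the overlap follows from the hypothesis that the union system on $V_1\cup V_2$ is acyclic, and invoke Theorem~\ref{maintheorem}. Your worry about Definition~\ref{coboffib}(iv) can be set aside: the product foliation with leaves $L\times\{t\}$ is precisely what the paper itself uses in its proof (and is forced by Definition~\ref{cobofsys}(vii), which requires $D_\gamma$ near the boundary to coincide with the boundary operators $D_{i,\alpha}$, hence no $\partial_t$ term), so no thickening is needed.
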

\begin{proof}
We put $\tilde M:=M\times [0,1]$ and $\tilde V:=V_1\times [0,2/3]\cup V_2\times [1/3,1]$. 
Then we have a cobordism 
$\{V_{1,\alpha}\times [0,2/3], V_{2,\beta}\times[1/3,1], \SF_{1,\alpha}\times [0,2/3],\SF_{2,\beta}\times[1/3,1], W|_{V_1\cup V_2}\times[0,1], D_{1,\alpha}, D_{2,\beta}\}_{\alpha,\beta\in A_1\cup A_2}$ between 
$\{V_{1,\alpha}, \SF_{1,\alpha}, W|_{V_1}, D_{1,\alpha}\}_{\alpha\in A_1}$ and 
$\{V_{2,\alpha}, \SF_{2,\alpha}, W|_{V_2}, D_{2,\alpha}\}_{\alpha\in A_2}$, 
and hence,  we have $\ind(M,V_1)=\ind(M,V_2)$ by Theorem~\ref{maintheorem}.  
\end{proof}

%%%%%%%%%%%%%%%%%%%%%%%%%%%%%%%%%%%%%
%\section{Applications}
\subsection{Application to the product of compatible fibrations}
 
For $i=1,2$ let $M_i$ be Riemannian manifolds and 
$W_i\to M_i$ $\Z/2$-graded $Cl(TM_i\oplus \R^{p,q})$-module bundles. 
Suppose that for each $i=1,2$ there exists a compact Lie group $G_i$ 
which acts on $M_i$ so that $W_i\to M_i$ is a $G_i$-equivariant 
$Cl(TM_i\oplus \R^{p,q})$-module bundle. 
Let $M_{i}'$ be the complement of the fixed point set, 
$M_i'=M_i\setminus M_i^{G_i}$.  
Suppose that there exists a $G_i$-tangential compatible fibration 
$\{M_{i,\alpha}',\SF_{i,\alpha}\}_{\alpha\in A_i}$ and a 
$G_i$-tangential compatible system $\{D_{i,\alpha}\}_{\alpha\in A_i}$ 
on it. 
Let $V_i$ be a $G_i$-invariant open subset of $M_i'$ such that 
the compatible system $\{D_{i,\alpha}\}_{\alpha\in A_i}$ is acyclic on $V_i$. 
We assume $M_i\setminus V_i$ is compact. 
Let $M$ be the product of $M_1$ and $M_2$ and 
$G$ the product of $G_1$ and $G_2$. 
As in \cite[Section~8]{Fujita-Furuta-Yoshida2} when we 
take small open neighborhoods $V_{i,\infty}$ of $M_i\setminus V_i$ for $i=1,2$ 
and put 
$V:=V_{1,\infty}\times V_2\cup V_1\times V_{2,\infty} 
\cup V_1\times V_2$, there exists a structure of $G$-tangential 
  acyclic compatible system on $V$. 
Under the above assumptions we can define three local indices
$\ind(M_1, V_1)$,  $\ind(M_2, V_2)$,  {\rm and}  
$\ind(M,V)$.  
%By the product formula of local indices (\cite[Theorem~]{Fujita-Furuta-Yoshida2}), we have 
%$$
%\ind(M,V)=\ind(M_1,V_1)\ind(M_2,V_2). 
%$$

On the other hand we can define a refinement of compatible fibration on $V$ 
using the decomposition
$$
V=M_{1,\infty}'\times V_2\cup M_1'\times V_2
\cup V_1\times M_{2,\infty}'\cup V_1\times M_2', 
$$where $M_{i,\infty}'$ is a small open neighborhood of $M_i\setminus M_i'=M_{i}^{G_i}$. 
Here we consider the structure of compatible fibration on $V$ as follows. 
\begin{itemize}
\item $M_{1,\infty}'\times V_2$ : product of the trivial foliation on $M_{1,\infty}'$ and $\{\SF_{2,\beta}|_{V_2}\}_{\beta\in A_2}$. 
\item $M_{1}'\times V_2$ : product of $\{\SF_{1,\alpha}|_{V_1}\}_{\alpha\in A_1}$ and $\{\SF_{2,\beta}|_{V_2}\}_{\beta\in A_2}$. 
\item $V_1\times M_{2,\infty}'$ : product of $\{\SF_{1,\alpha}|_{V_1}\}_{\alpha\in A_1}$ and the trivial foliation on $M_{2,\infty}'$. 
\item $V_1\times M_2'$ : product of $\{\SF_{1,\alpha}|_{V_1}\}_{\alpha\in A_1}$ and $\{\SF_{2,\beta}|_{V_2}\}_{\beta\in A_2}$. 
\end{itemize}
Note that even if the compatible system $\{D_{1,\alpha}|_{M_1'}\}_{\alpha\in A_1}$ 
is not acyclic, the product of it and 
$\{D_{2,\beta}|_{V_2}\}_{\beta\in A_2}$ is acyclic. 

In this way we have an another acyclic compatible system on $V$, 
which is also $G$-tangential and different from the product of 
the given compatible systems. 
We denote the open subset $V$ with this refined structure by $V^{\rm ref}$. 
We can define the local index $\ind(M,V^{\rm ref})$ 
for this refined structure. 

\begin{theorem}\label{ind=ind^ref}
Under the above setting we have 
$$
\ind(M,V^{\rm ref})=\ind(M,V)=\ind(M_1,V_1)\ind(M_2,V_2). 
$$
\end{theorem}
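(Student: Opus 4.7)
The plan is to reduce the first equality $\ind(M,V^{\rm ref})=\ind(M,V)$ to Theorem~\ref{well def of local ind} applied to the single underlying open subset $V\subset M$ carrying two distinct $G$-tangential acyclic compatible system structures, namely the product structure used to define $\ind(M,V)$ and the refined structure. The second equality is immediate from the product formula in Theorem~\ref{def of local ind}. To invoke Theorem~\ref{well def of local ind} I must verify both that the refined structure is itself a $G$-tangential acyclic compatible system and that the union of the two structures is a $G$-tangential acyclic compatible system on $V$.

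First I would check the refined structure directly. The $G$-invariance of the four-piece open covering $\{M_{1,\infty}'\times V_2,\,M_1'\times V_2,\,V_1\times M_{2,\infty}',\,V_1\times M_2'\}$ and the tangentiality of the product foliations are inherited factor-wise from the two given $G_i$-tangential structures. For leaf-wise acyclicity, on a product leaf $L_{1,\alpha}\times L_{2,\beta}\subset M_1'\times V_2$ the induced Dirac-type operator has the standard graded-product form, whose square has the shape $D_{1,\alpha}^2\otimes 1+1\otimes D_{2,\beta}^2$; the second summand is strictly positive by acyclicity of $D_{2,\beta}$ on $V_2$, so the leaf operator has trivial kernel. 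The piece $V_1\times M_2'$ is symmetric, and the two pieces $M_{1,\infty}'\times V_2$ and $V_1\times M_{2,\infty}'$ are simpler still because one foliation is trivial and the leaf Dirac operator reduces to a single $D_{i,\alpha}$ acting along its acyclic factor. Non-negativity of anti-commutators among refined operators splits into $M_1$- and $M_2$-contributions and reduces to the assumed compatible-system non-negativity on $M_1'$ and on $M_2'$.

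Next I would handle the cross-acyclicity between the original and refined structures on their overlaps inside $V$. The key observation is that every Dirac operator appearing in either structure is a graded sum of a pull-back of some $D_{1,\alpha}$ and a pull-back of some $D_{2,\beta}$ acting in orthogonal Clifford directions; the anti-commutation of the $TM_1$- and $TM_2$-Clifford actions makes the mixed cross terms in the anti-commutator of any two such operators vanish. What remains is a sum of an anti-commutator among $D_{1,\cdot}$'s and an anti-commutator among $D_{2,\cdot}$'s, each non-negative by hypothesis. The leaf-wise zero-kernel condition on overlaps is inherited from whichever of the two structures refines the leaf, since on such overlaps every leaf of the union either refines or coincides with a leaf of one of the two given structures.

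The step I expect to be the main obstacle is the bookkeeping needed to fit the union structure into the precise framework of Theorem~\ref{well def of local ind}: one must produce a coherent family of $G$-invariant cut-off functions subordinate to the combined covering that are constant along the relevant leaves and satisfy the estimates recalled in Subsection~\ref{ind(M,V,W)}, so that the perturbation-based definition of the local index survives the doubling of foliations uniformly on the cylindrical end. Once this is arranged, Theorem~\ref{well def of local ind} delivers $\ind(M,V^{\rm ref})=\ind(M,V)$, and combining with the product formula of Theorem~\ref{def of local ind} completes the proof.
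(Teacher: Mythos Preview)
Your proposal is correct and follows essentially the same route as the paper: the first equality is deduced from Theorem~\ref{well def of local ind} and the second from the product formula. Your worry about producing cut-off functions for the union structure is unnecessary, since the hypothesis of Theorem~\ref{well def of local ind} is only that the union be a $G$-tangential acyclic compatible system, and the existence of suitable cut-off functions then follows automatically from $G$-tangentiality as in the discussion after Theorem~\ref{def of local ind}.
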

\begin{proof}
%Consider the product of $M$ and the closed interval $ [0,1]$. 
%Then the open covering of $V\times [0,1]$ defined by 
%$V \times [0,2/3] \cup V^{\rm ref} \times [1/3,1]$ induces an 
%%acyclic cobordism of two acyclic compatible systems on $V$ and $V^{\rm ref}$, 
%which is $G$-tangential. 
The first equality follows from Theorem~\ref{well def of local ind}. 
The second equality follows from 
the product formula of local indices (\cite[Theorem~8.8]{Fujita-Furuta-Yoshida2}). 
\end{proof}

\begin{remark}
Since we assume the global group action, 
when all the data are $G_i$-equivariant 
the equality in Theorem~\ref{ind=ind^ref} can be 
regarded as an equality between $G$-equivariant indices. 
\end{remark}

\begin{remark}
It is possible to give a formulation of 
the similar equality as in Theorem~\ref{ind=ind^ref} 
for a twisted product of $M_1$ and $M_2$ using a principal bundle over $M_1$. 
\end{remark}

\begin{example}
Let $M_1$ be the product of $S^1$ and the open interval $(-1,1)$. 
Consider the standard $S^1$-action on the first factor. 
It induces a structure of $S^1$-tangential compatible fibration on $M_1$. 
In this case we have $M'_1=M_1$. 
Let $W$ be a $\Z/2$-graded Clifford module bundle over $M_1$. 
Suppose that we have a family of Dirac-type operator 
$\{D_{S^1, r}:\Gamma(W|_{S^1\times \{r\}})\to \Gamma(W|_{S^1\times \{r\}})\}_{r\in(-1,1)}$. We assume that $\ker(D_{S^1,r})=\{0\}$ for all $r\neq 0$. 
Then $\{D_{S^1, r}\}_{r\neq 0}$ induce an acyclic compatible system on 
$V_1:=M_1\setminus S^1\times\{0\}$. 
Consider the product $M:=M_1\times M_1$. 
When we put 
$$
V_{01}:=(S^1\times (-1/4,1/4))\times V_1, \quad 
V_{10}:=V_1\times (S^1\times (-1/4,1/4)),  
$$
$$
V_{11}:=V_1\times V_1, \ {\rm and} \  
V:=V_{01}\cup V_{10}\cup V_{11},  
$$
it induces a structure of compatible system 
using the following torus actions.  
\begin{itemize}
\item $V_{01}$ : $\{e\}\times S^1$-action. 
\item $V_{10}$ : $S^1\times \{e\}$-action. 
\item $V_{11}$ : $S^1\times S^1$-action. 
\end{itemize}
On the other hand when we put 
$$
V_{01}'=M_1\times V_1, \ V_{10}'=V_1\times M_1, \ {\rm and} \  V^{\rm ref}:=V_{01}'\cup V_{10}'(=V), 	
$$it induces a structure of compatible system 
using the $S^1\times S^1$-actions on $V_{01}'$ and $V_{10}'$. 
See Figure~\ref{cylinderproduct}.

\begin{figure}[h]
\begin{center}
\input{cylinderproduct.tex}
\caption{$V$ and $V^{\rm ref}$}\label{cylinderproduct}
\end{center}
\end{figure}

Theorem~\ref{ind=ind^ref} guarantees that 
two local indices defined by using these two different structures coincide: 
$\ind(M,V)=\ind(M,V^{\rm ref})$. 

\end{example}

\noindent{\bf Acknowledgements.}
The author would like to thank 
Mikio Furuta and Takahiko Yoshida 
for stimulating discussions. 

\bibliographystyle{amsplain}
\bibliography{reference}

\providecommand{\bysame}{\leavevmode\hbox to3em{\hrulefill}\thinspace}
\providecommand{\MR}{\relax\ifhmode\unskip\space\fi MR }
% \MRhref is called by the amsart/book/proc definition of \MR.
\providecommand{\MRhref}[2]{%
  \href{http://www.ams.org/mathscinet-getitem?mr=#1}{#2}
}
\providecommand{\href}[2]{#2}
\begin{thebibliography}{1}

\bibitem{Braverman}
M.~Braverman, \emph{Index theorem for equivariant {D}irac operators on
  noncompact manifolds}, $K$-Theory \textbf{27} (2002), no.~1, 61--101.
  \MR{1936585 (2003k:58031)}

\bibitem{Bravermancobinv}
\bysame, \emph{New proof of the cobordism invariance of the index}, Proc. Amer.
  Math. Soc. \textbf{130} (2002), no.~4, 1095--1101 (electronic). \MR{1873784
  (2002j:58035)}

\bibitem{Fujita-Furuta-Yoshida1}
H.~Fujita, M.~Furuta, and T.~Yoshida, \emph{Torus fibrations and localization
  of index {I}---polarization and acyclic fibrations}, J. Math. Sci. Univ.
  Tokyo \textbf{17} (2010), no.~1, 1--26. \MR{2676658 (2012b:53200)}

\bibitem{Fujita-Furuta-Yoshida2}
\bysame, \emph{Torus fibrations and localization of index {II}: local index for
  acyclic compatible system}, Comm. Math. Phys. \textbf{326} (2014), no.~3,
  585--633. \MR{3173401}

\bibitem{Fujita-Furuta-Yoshida3}
\bysame, \emph{Torus fibrations and localization of index {III}: equivariant
  version and its applications}, Comm. Math. Phys. \textbf{327} (2014), no.~3,
  665--689. \MR{3192046}

\bibitem{Higsoncovinv}
N.~Higson, \emph{A note on the cobordism invariance of the index}, Topology
  \textbf{30} (1991), no.~3, 439--443. \MR{1113688 (92f:58171)}

\bibitem{Nicocovinv}
L.~I. Nicolaescu, \emph{On the cobordism invariance of the index of {D}irac
  operators}, Proc. Amer. Math. Soc. \textbf{125} (1997), no.~9, 2797--2801.
  \MR{1402879 (97j:58148)}

\bibitem{Shubin}
M.~A. Shubin, \emph{Semiclassical asymptotics on covering manifolds and {M}orse
  inequalities}, Geom. Funct. Anal. \textbf{6} (1996), no.~2, 370--409.
  \MR{1384616 (97i:58171)}

\end{thebibliography}

\end{document}